\newtheorem{theorem}{Theorem}
\newtheorem{definition}{Definition}
\newtheorem{lemma}{Lemma}
\newtheorem{corollary}{Corollary}
\newtheorem{remark}{Remark}
\newtheorem{example}{Example}
\newenvironment{proof}[1][Proof]{\begin{trivlist}
\item[\hskip \labelsep {\bfseries #1}]}{\end{trivlist}}
\title{Mixing Time of Glauber Dynamics With Parallel Updates and Heterogeneous Fugacities}
\author{Mathieu Leconte, Jian Ni, and R. Srikant\thanks{The authors are with the Coordinated Science Laboratory and Department of Electrical and Computer Engineering, University of
Illinois at Urbana-Champaign, Urbana, IL 61801, USA (leconte2@illinois.edu, jianni@illinois.edu,
rsrikant@illinois.edu).}}
\begin{document}

\date{}

\maketitle

\begin{abstract}
Glauber dynamics is a powerful tool to generate randomized, approximate solutions to combinatorially
difficult problems. Applications include Markov Chain Monte Carlo (MCMC) simulation and distributed
scheduling for wireless networks. In this paper, we derive bounds on the mixing time of a generalization of
Glauber dynamics where multiple vertices are allowed to update their states in parallel and the fugacity of
each vertex can be different. The results can be used to obtain various conditions on the system parameters
such as fugacities, vertex degrees and update probabilities, under which the mixing time grows polynomially
in the number of vertices.
\end{abstract}

\section{Introduction}

Consider a graph $G=(V,E)$, where $V$ is the set of vertices and $ E$ is the set of edges. Suppose $| V|=n$.
For each vertex $v\in  V$, we use $\mathcal{N}_v=\{w\in V: (v,w)\in  E\}$ to denote the set of neighbors of
$v$ in the graph. An \emph{independent set} of $ G$ is a subset of the vertices where no two vertices are
neighbors of each other. Let $\mathcal{I}$ be the set of all independent sets of $ G$.

A \emph{configuration} of the vertices in $G$ is a vector of the form $(\sigma_v)_{v\in V}$, with
$\sigma_v\in\Lambda=\{0, 1\}$ for all $v\in V$. For a vertex $v$ and a configuration $\sigma\in\Lambda^n$, we
say $v\in\sigma$ if $\sigma_v=1$. A configuration $\sigma$ on $ G$ is \emph{feasible} if the set $\{v\in
V:\sigma_v=1\}$ is an independent set of $ G$, i.e., if
\begin{eqnarray}
\sigma_v+\sigma_w & \leq & 1, \mbox{ for all } (v,w)\in  E.
\end{eqnarray} Let
$\Omega\subseteq\Lambda^n$ be the set of all feasible configurations on $ G$.

We associate each vertex $v \in  V$ with a parameter $\lambda$. We are interested in the following
\emph{product-form distribution} over the feasible configurations (independent sets) of the graph:
\begin{eqnarray}
\pi(\sigma) & = & \frac{1}{Z}\prod_{v \in \sigma}\lambda, \label{eq:productform} \\
Z & = & \sum_{\sigma \in \Omega} \prod_{v \in \sigma}\lambda.
\end{eqnarray}
Note that this corresponds to the so-called \emph{hard-core gas model} studied in statistical physics, where
$\lambda$ is called the \emph{fugacity} (e.g., \cite{mar99,vig01}).

\emph{Glauber dynamics} is a Markov chain which generates the stationary distribution in
(\ref{eq:productform}). It has many applications in statistical physics and computer science (e.g., hard-core
gas model, graph coloring, approximate counting, combinatorial optimization \cite{dyegre98,dyegre00,mar99}).
Under (single-site) Glauber dynamics, in each time slot one vertex is selected uniformly at random, and only
that vertex can change its state while other vertices keep their states unchanged. Let $\sigma(t)$ be
the state of the Markov chain in time slot $t$.\\

\noindent\hrulefill
\\
\textbf{Single-Site Glauber Dynamics (in Time Slot $t$)}

\noindent\hrulefill
\begin{itemize}

\item[1.] Choose a vertex $v\in  V$ uniformly at random.

\item[2.] For vertex $v$:\\
\phantom{aa} \textbf{If} $\sum_{w\in \mathcal{N}_v} \sigma_w(t-1)=0$  \\
\phantom{aaaa} (a) $\sigma_v(t)=1$ with probability $p=\frac{\lambda}{1+\lambda}.$ \\
\phantom{aaaa} (b) $\sigma_v(t)=0$ with probability $\bar{p}=\frac{1}{1+\lambda}.$ \\
\phantom{aa} \textbf{Else}\\
\phantom{aaaa} (c) $\sigma_v(t)=0$.

\item[3.] For any vertex $w\in  V\setminus\{v\}$: \\
\phantom{aaaa} (d) $\sigma_w(t)=\sigma_w(t-1)$.
\end{itemize}
\noindent\hrulefill\\

It is not hard to verify that the Glauber dynamics Markov chain is \emph{reversible} and has the product-form
distribution in (\ref{eq:productform}). In most applications, the performance of the Glauber dynamics is
determined by how fast the Markov chain converges to the stationary distribution. The Glauber dynamics is
said to have the \emph{fast (rapid) mixing property} if the mixing time is polynomial in the size of the
graph (the number of vertices $n$).  In \cite{vig01} it was shown that single-site Glauber dynamics has a
mixing time of $O(n \log n)$ when $\lambda < \frac{2}{\Delta-2},$ where $\Delta$ is the maximum vertex degree
in the graph.

Recently, Glauber dynamics has been applied to design distributed throughput-optimal scheduling algorithms
for wireless networks (e.g., \cite{jiawal08,nisri09,rajshashi09}). In the wireless network setting, the graph
$G=(V,E)$ corresponds to the \emph{interference graph} of the wireless network, where the vertices in $V$
represent \emph{links} (transmitter-receiver pairs) in the network, and there is an edge between two vertices
in $G$ if the corresponding wireless network links interfere with each other. A feasible schedule of the
network is a set of links which do not interfere with each other, which corresponds to an independent set in
the interference graph $G$. To achieve maximum throughput, the fugacities need to be chosen as appropriate
functions of the queue lengths of the links, which are normally different from link to link. This motivates
the study of Glauber dynamics with \emph{heterogenous fugacities}.

In addition, in wireless networks, potentially multiple network links (vertices in $G$) can update their
states in a single time slot, and we would expect that the mixing time of the Glauber dynamics Markov chain
will be reduced with such parallel updates. However, the Markov chain may not even have the product-form
distribution (which is a key property for establishing throughput-optimality in
\cite{jiawal08,nisri09,rajshashi09}) if we let an
arbitrary set of vertices update their states. The following two questions then arise:\\
(1) \emph{How to select the vertices in each time slot to update their states such that the
product-form distribution is maintained?} \\
(2) \emph{What is the mixing time of such a Glauber dynamics with parallel updates?}

The first question has been addressed in \cite{nisri09} and the second question will be addressed in this
paper. The rest of the paper is organized as follows. In Section~\ref{sec:parallelGD} we introduce a
generalization of Glauber dynamics with parallel updates and heterogenous fugacities. In
Section~\ref{sec:prelim} we provide some technical background on the mixing time of Markov chains. In
Sections~\ref{sec:mixingPGD} and \ref{sec:mixingHF} we derive bounds on the mixing time of the Glauber
dynamics with parallel updates and heterogenous fugacities. The paper is concluded in
Section~\ref{sec:conclusion}.

\section{Glauber dynamics with parallel updates}  \label{sec:parallelGD}

In \cite{nisri09} we have introduced a generalization of Glauber dynamics where multiple vertices (wireless
network links) are allowed to update their states in a single time slot, under which the Markov chain is
reversible and retains the product-form distribution. The key idea is that in every time slot, we select an
independent set of vertices $\mathbf m \in \mathcal I$ to update their states according to a distributed
randomized procedure, i.e., we select $\mathbf m \in \mathcal I$ with probability $q_{\mathbf m}$, where
$\sum_{\mathbf m \in \mathcal I} q_{\mathbf m}=1.$ We call $\mathbf m$ the \emph{update set} (or
\emph{decision schedule} in \cite{nisri09}). The parallel Glauber dynamics is formally described as
follows.\\

\noindent\hrulefill
\\
\textbf{Parallel Glauber Dynamics (in Time Slot $t$)}

\noindent\hrulefill
\begin{itemize}
\item[1.] Randomly choose an update set $\mathbf m\in \mathcal I$ with probability $q_{\mathbf m}$.

\item[2.] For all vertex $v\in \mathbf m$:\\
\phantom{aa} \textbf{If} $\sum_{w\in \mathcal{N}_v} \sigma_w(t-1)=0$  \\
\phantom{aaaa} (a) $\sigma_v(t)=1$ with probability $p_v=\frac{\lambda_v}{1+\lambda_v}.$ \\
\phantom{aaaa} (b) $\sigma_v(t)=0$ with probability $\bar{p}_v=\frac{1}{1+\lambda_v}.$ \\
\phantom{aa} \textbf{Else}\\
\phantom{aaaa} (c) $\sigma_v(t)=0$.

\item[] For all vertex $w \notin \mathbf m:$\\
\phantom{aaaa} (d) $\sigma_w(t)=\sigma_w(t-1)$.

\end{itemize}
\noindent\hrulefill \\

The following results on the parallel Glauber dynamics have been established in \cite{nisri09}.

\begin{lemma}  \label{lemma:feasible}
Let $\mathbf m(t)$ be the update set selected in time slot $t$. If $\sigma(t-1) \in \Omega$ and $\mathbf m(t)
\in \mathcal I$, then $\sigma(t) \in \Omega$.
\end{lemma}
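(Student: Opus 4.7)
The plan is to fix an arbitrary edge $(v,w) \in E$ and show $\sigma_v(t) + \sigma_w(t) \leq 1$; doing this for every edge yields $\sigma(t) \in \Omega$. The argument proceeds by a short case analysis on how $\{v,w\}$ intersects the update set $\mathbf m(t)$.

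First I would rule out the case $v,w \in \mathbf m(t)$ outright: since $\mathbf m(t) \in \mathcal I$ is an independent set and $(v,w)$ is an edge, the two endpoints of any edge cannot both lie in $\mathbf m(t)$. Next, if $v,w \notin \mathbf m(t)$, then step (d) gives $\sigma_v(t) = \sigma_v(t-1)$ and $\sigma_w(t) = \sigma_w(t-1)$, and the inequality $\sigma_v(t) + \sigma_w(t) \leq 1$ follows directly from the hypothesis $\sigma(t-1) \in \Omega$.

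The only remaining case is the asymmetric one, so suppose without loss of generality that $v \in \mathbf m(t)$ and $w \notin \mathbf m(t)$; then $\sigma_w(t) = \sigma_w(t-1)$. If $\sigma_w(t-1) = 0$, there is nothing to check since $\sigma_v(t) \leq 1$. If instead $\sigma_w(t-1) = 1$, then $w \in \mathcal N_v$ forces $\sum_{u \in \mathcal N_v} \sigma_u(t-1) \geq 1$, so rule (c) in the Parallel Glauber Dynamics deterministically sets $\sigma_v(t) = 0$, again giving $\sigma_v(t) + \sigma_w(t) = 0 + 1 \leq 1$.

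There is no real obstacle here; the proof is essentially a bookkeeping exercise. The only subtlety worth flagging is that a vertex $v \in \mathbf m(t)$ whose update turns it on (rule (a)) relies on its neighbors being $0$ at time $t-1$, and we implicitly need these neighbors to remain $0$ at time $t$. This is automatic because $\mathbf m(t)$ is an independent set, so no neighbor of $v$ is in $\mathbf m(t)$, and hence by step (d) each such neighbor is unchanged. This observation is what makes the independent-set restriction on $\mathbf m(t)$ exactly the right condition to preserve feasibility.
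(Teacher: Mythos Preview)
Your proof is correct and complete. The paper itself does not supply a proof of this lemma; it merely records the statement and attributes it to \cite{nisri09}. Your edge-by-edge case analysis on $|\{v,w\}\cap\mathbf m(t)|$ is exactly the natural argument, and your closing remark correctly isolates the one place where the independent-set hypothesis on $\mathbf m(t)$ is used.
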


Because $\sigma(t)$ only depends on the previous state $\sigma(t-1)$ and some randomly selected update set
$\mathbf m(t)$, $\sigma(t)$ evolves as a discrete-time Markov chain (DTMC). Next we will derive the
transition probabilities between the states.

\begin{lemma} \label{lemma:transition}
A state $\sigma \in \Omega$ can make a transition to a state $\eta \in \Omega$ if and only if $\sigma \cup
\eta \in \Omega$ and there exists an update set $\mathbf m\in \mathcal I$ with $q_{\mathbf m}>0$ such that
$$\sigma \bigtriangleup \eta=(\sigma \setminus \eta)\cup(\eta
\setminus \sigma) \subseteq \mathbf m,$$ and in this case the transition probability from $\sigma$ to $\eta$
is given by:
\begin{eqnarray} \label{eq:transition}
P(\sigma, \eta)  & = & \sum_{\mathbf m\in \mathcal I : \sigma \bigtriangleup \eta \subseteq \mathbf
m}q_{\mathbf m} \Big(\prod_{v\in \mathbf \sigma \setminus \eta}\bar{p}_v\Big) \Big(\prod_{v\in \eta\setminus
\sigma}p_v\Big) \Big(\prod_{v\in \mathbf m \cap (\sigma \cap \eta)}p_v\Big) \Big(\prod_{v\in \mathbf m
\setminus (\sigma\cup \eta) \setminus \mathcal{N}_{\sigma \cup \eta}}\bar{p}_v\Big). \nonumber \\
\end{eqnarray}
\end{lemma}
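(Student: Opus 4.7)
The plan is to verify the formula by conditioning on the randomly-chosen update set $\mathbf m$ and tracking, vertex by vertex, the probability that the update rules in Step 2 produce exactly the state $\eta$. First I would argue necessity of the stated conditions. Since Step 2(d) keeps every vertex outside $\mathbf m$ frozen, the symmetric difference $\sigma \bigtriangleup \eta$ must be contained in $\mathbf m$. The condition $\sigma \cup \eta \in \Omega$ can then be verified by ruling out each possible edge inside $\sigma \cup \eta$: edges internal to $\sigma$ or to $\eta$ are excluded by $\sigma, \eta \in \Omega$, while an edge between some $v \in \eta \setminus \sigma$ and any vertex of $\sigma$ is excluded because rule (a) requires $\sum_{w \in \mathcal N_v}\sigma_w(t-1)=0$ in order for $v$ to turn on.

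Next, for a fixed admissible $\mathbf m$, I would partition $\mathbf m$ into four groups according to the roles the vertices play in the transition:
\begin{itemize}
\item $v \in \sigma \setminus \eta$ must turn off; since $v \in \sigma \in \Omega$, no neighbor of $v$ lies in $\sigma(t-1)$, so rule (b) applies and contributes $\bar p_v$.
\item $v \in \eta \setminus \sigma$ must turn on; since $v \in \eta$ and $\sigma \cup \eta \in \Omega$, again no neighbor of $v$ lies in $\sigma(t-1)$, so rule (a) contributes $p_v$.
\item $v \in \mathbf m \cap (\sigma \cap \eta)$ must stay on; by the same reasoning rule (a) contributes $p_v$.
\item $v \in \mathbf m \setminus (\sigma \cup \eta)$ must stay off; this case splits according to whether $v$ has a neighbor in $\sigma$ (rule (c), factor $1$) or not (rule (b), factor $\bar p_v$).
\end{itemize}
Vertices outside $\mathbf m$ contribute trivially, since $\sigma \bigtriangleup \eta \subseteq \mathbf m$ forces $\sigma_v = \eta_v$ there and rule (d) holds deterministically.

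The only subtle point — and the step I expect to be the main bookkeeping obstacle in matching my expression to the displayed one — is rewriting the split in the fourth group in terms of $\mathcal N_{\sigma \cup \eta}$ rather than $\mathcal N_{\sigma}$. For $v \in \mathbf m \setminus (\sigma \cup \eta)$, I claim that $v$ has a neighbor in $\sigma$ if and only if $v \in \mathcal N_{\sigma \cup \eta}$. The forward direction is immediate. For the converse, any neighbor $w \in \sigma \cup \eta$ of $v$ that lay in $\eta \setminus \sigma$ would automatically lie in $\mathbf m$ (since $\eta \setminus \sigma \subseteq \sigma \bigtriangleup \eta \subseteq \mathbf m$), contradicting $\mathbf m \in \mathcal I$ because $v$ itself is in $\mathbf m$; hence every such $w$ must lie in $\sigma$. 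With this identification, multiplying the per-vertex factors, weighting by $q_{\mathbf m}$, and summing over all update sets $\mathbf m \supseteq \sigma \bigtriangleup \eta$ with $q_{\mathbf m}>0$ yields exactly (\ref{eq:transition}).
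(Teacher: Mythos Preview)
The paper does not actually prove Lemma~\ref{lemma:transition}; it merely states the result and attributes it (along with Lemma~\ref{lemma:feasible} and Theorem~\ref{theorem:productform}) to the earlier reference \cite{nisri09}. So there is no in-paper argument to compare against.

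That said, your proof is correct and complete. The necessity argument handles all edge types in $\sigma\cup\eta$; the case-by-case accounting over the four groups of $\mathbf m$ is accurate; and the one delicate point you flag --- replacing ``$v$ has a neighbor in $\sigma$'' by ``$v\in\mathcal N_{\sigma\cup\eta}$'' for $v\in\mathbf m\setminus(\sigma\cup\eta)$ --- is justified exactly as you say, via $\eta\setminus\sigma\subseteq\mathbf m$ and the independence of $\mathbf m$. One minor phrasing issue: in the second bullet you invoke $\sigma\cup\eta\in\Omega$ as if it were a hypothesis, whereas it was just derived in the necessity direction; it would be slightly cleaner to argue directly (a neighbor of $v\in\eta\setminus\sigma$ lying in $\sigma\cap\eta$ would violate $\eta\in\Omega$, and one lying in $\sigma\setminus\eta\subseteq\mathbf m$ would violate $\mathbf m\in\mathcal I$), but the circularity is only apparent since you are computing $P(\sigma,\eta)$ under the standing assumption that the transition is possible.
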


Under the Glauber dynamics with parallel updates, let
$$q_v = \sum_{\mathbf m\ni v}q_{\mathbf m}$$
be the probability of updating vertex $v$ in a time slot.

\begin{theorem} \label{theorem:productform}
A necessary and sufficient condition for the Markov chain of the parallel Glauber dynamics to be irreducible
and aperiodic is $\cup_{\mathbf m\in \mathcal I: q_{\mathbf m}>0}\mathbf m = V$, or equivalently, $q_v>0$ for
all $v\in V$, and in this case the Markov chain is reversible and has the following product-form stationary
distribution:
\begin{eqnarray}
\pi(\sigma) & = & \frac{1}{Z}\prod_{v \in \sigma}\lambda_v, \label{eq:productformPGD} \\
Z & = & \sum_{\sigma \in \Omega} \prod_{v \in \sigma}\lambda_v.
\end{eqnarray}
\end{theorem}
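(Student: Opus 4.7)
My plan is to handle the equivalence of the two conditions on update-set coverage, then treat aperiodicity, irreducibility, and reversibility separately, exploiting the explicit transition formula of Lemma~\ref{lemma:transition}.

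First, the equivalence $\cup_{\mathbf m : q_\mathbf m > 0}\mathbf m = V \iff q_v > 0 \;\forall v$ is immediate from the definition $q_v = \sum_{\mathbf m \ni v} q_\mathbf m$. For \emph{necessity}, I would note that if $q_v = 0$ for some $v$, then in every time slot $v$ is excluded from the update set, so $\sigma_v(t) = \sigma_v(0)$ almost surely; the chain cannot be irreducible on $\Omega$ because the two feasible states obtained by flipping $v$ alone (when $v$ has no neighbors activated) are mutually unreachable.

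For \emph{aperiodicity}, I would apply Lemma~\ref{lemma:transition} with $\eta = \sigma$: taking any $\mathbf m$ with $q_\mathbf m > 0$, each factor in the product is of the form $p_v$ or $\bar p_v$, all strictly positive since $\lambda_v \in (0,\infty)$, and vertices in $\mathbf m \setminus \sigma$ with a neighbor in $\sigma$ contribute a deterministic factor of $1$ (they must stay $0$). Hence $P(\sigma,\sigma) > 0$ for every $\sigma \in \Omega$. For \emph{irreducibility}, I would show that from any $\sigma \in \Omega$ we can reach $\mathbf 0 \in \Omega$, and from $\mathbf 0$ we can reach any $\eta \in \Omega$. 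To remove a vertex $v \in \sigma$: pick some $\mathbf m \ni v$ with $q_\mathbf m > 0$ (possible under the hypothesis), and observe that $\sigma \triangle (\sigma \setminus \{v\}) = \{v\} \subseteq \mathbf m$, with the transition probability in (\ref{eq:transition}) being a product of strictly positive terms. Iterating removes vertices one at a time. To build up $\eta$ from $\mathbf 0$, I would add the vertices of $\eta$ one at a time in any order; at each step the current state is a subset of $\eta$, hence feasible, and the target differs by a single vertex whose addition remains feasible since $\eta$ itself is an independent set, giving a positive one-step transition probability by the same argument.

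For the \emph{product-form distribution}, I would verify the detailed balance equations $\pi(\sigma) P(\sigma,\eta) = \pi(\eta) P(\eta,\sigma)$ using (\ref{eq:transition}). The key observation is that the sets $\sigma \triangle \eta$, $\sigma \cap \eta$, $\sigma \cup \eta$ and $\mathcal N_{\sigma \cup \eta}$ are symmetric in $\sigma$ and $\eta$, so in the ratio $P(\sigma,\eta)/P(\eta,\sigma)$ all factors corresponding to $\mathbf m \cap (\sigma \cap \eta)$ and $\mathbf m \setminus (\sigma \cup \eta) \setminus \mathcal N_{\sigma \cup \eta}$ cancel term-by-term inside the sum over $\mathbf m$. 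What remains is
\[
\frac{P(\sigma,\eta)}{P(\eta,\sigma)} = \prod_{v \in \sigma \setminus \eta}\frac{\bar p_v}{p_v} \prod_{v \in \eta \setminus \sigma}\frac{p_v}{\bar p_v} = \prod_{v \in \eta \setminus \sigma}\lambda_v \Big/ \prod_{v \in \sigma \setminus \eta}\lambda_v = \frac{\pi(\eta)}{\pi(\sigma)},
\]
using $p_v/\bar p_v = \lambda_v$. Detailed balance then yields both reversibility and the identification of $\pi$ in (\ref{eq:productformPGD}) as the (unique, by irreducibility) stationary distribution.

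The main obstacle I anticipate is purely bookkeeping: carefully matching the four product factors in the Lemma~\ref{lemma:transition} formula across the $\sigma \leftrightarrow \eta$ swap so that only the $p_v/\bar p_v$ terms on the symmetric difference survive. Everything else reduces to positivity of $p_v, \bar p_v, q_v$ under the stated condition.
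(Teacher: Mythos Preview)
The paper does not actually prove Theorem~\ref{theorem:productform}; it is stated (along with Lemmas~\ref{lemma:feasible} and~\ref{lemma:transition}) as a result established in \cite{nisri09}, with no argument given here. So there is no in-paper proof to compare against.

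Your proposal is a sound reconstruction. The equivalence, necessity, aperiodicity, and irreducibility steps are all correct and standard. For the detailed-balance verification, your conclusion is right, but the phrasing ``cancel term-by-term inside the sum over $\mathbf m$'' is slightly imprecise: one cannot simply take a ratio of two sums termwise. The clean way to say it is that in (\ref{eq:transition}) the two products $\prod_{v\in\sigma\setminus\eta}\bar p_v$ and $\prod_{v\in\eta\setminus\sigma}p_v$ are independent of $\mathbf m$ and therefore factor out of the sum, while the remaining sum $\sum_{\mathbf m\supseteq\sigma\triangle\eta}q_{\mathbf m}\prod_{v\in\mathbf m\cap(\sigma\cap\eta)}p_v\prod_{v\in\mathbf m\setminus(\sigma\cup\eta)\setminus\mathcal N_{\sigma\cup\eta}}\bar p_v$ is manifestly symmetric in $\sigma$ and $\eta$ and hence identical in $P(\sigma,\eta)$ and $P(\eta,\sigma)$. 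The ratio then reduces exactly to the $\lambda$-ratio you wrote. With that minor rewording, the argument is complete.
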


\begin{remark}
The single-site Glauber dynamics can be viewed as a special case of the parallel Glauber dynamics in which
$q_{\mathbf m}>0$ if and only if the update set $\mathbf m$ always consists of only one vertex.
\end{remark}

In this paper we will show that the parallel Glauber dynamics has a very fast mixing time $O(\log n)$ under
certain conditions for bounded-degree graphs. On the other hand, it was shown in \cite{haysin05} that the
single-site Glauber dynamics has a mixing time at least $\Omega(n \log n)$ for bounded-degree graphs.

\section{Mixing Time of Markov Chains}  \label{sec:prelim}

Consider a finite-state, irreducible, aperiodic Markov chain $(P,\Omega, \pi)$ where $P$ denotes the
transition matrix, $\Omega$ denotes the state space, and $\pi$ denotes the unique stationary distribution of
the Markov chain.

\begin{definition}
The \emph{variation distance} between two distributions $\mu, \nu$ on $\Omega$ is defined as
\begin{eqnarray}
||\mu-\nu||_{var} = \frac{1}{2}\sum_{x\in \Omega}|\mu(x)-\nu(x)|.
\end{eqnarray}
\end{definition}

\begin{definition}
The \emph{mixing time} $T_{mix}(\epsilon)$ for $\epsilon>0$ of the Markov chain is defined as the time
required for the Markov chain to get close to the stationary distribution. More precisely,
\begin{eqnarray}
T_{mix}(\epsilon) = \max_{x\in \Omega} \inf\Big\{t: ||P^t(x,\cdot)-\pi||_{var}) \leq \epsilon\Big\}.
\end{eqnarray}
\end{definition}

\begin{definition}
A \emph{coupling} of the Markov chain is a stochastic process $(X(t), Y(t))$ on $\Omega \times \Omega$ such
that $\{X(t)\}$ and $\{Y(t)\}$ marginally are copies the original Markov chain, and if $X(t)=Y(t)$, then
$X(t+1)=Y(t+1)$.
\end{definition}

Let $\Phi$ be a distance function (\emph{metric}) defined on $\Omega \times \Omega$, which satisfies that for
any $x,y,z \in \Omega$:
\begin{itemize}
\item[(1)] $\Phi(x,y)\geq 0$, with equality if and only if $x=y$.

\item[(2)] $\Phi(x,y) = \Phi(y,x).$

\item[(3)] $\Phi(x,z) \leq \Phi(x,y)+\Phi(y,z).$
\end{itemize}

Let $$D_{min} = \min_{x,y\in \Omega, x\neq y}\Phi(x,y), \mbox{ }D_{max} = \max_{x,y\in
\Omega}\Phi(x,y),\mbox{ } D=\frac{D_{max}}{D_{min}}.$$

The following result can be used to obtain an upper bound for the mixing time of the Markov chain (e.g.,
\cite{dyegre98}).

\begin{theorem} \label{theorem:coupling}
Suppose there exist a constant $\beta<1$ and a coupling $(X(t),Y(t))$ of the Markov chain such that, for all
$x,y\in \Omega$,
\begin{eqnarray} \label{eq:betacoupling}
E[\Phi(X(t+1),Y(t+1))|X(t)=x,Y(t)=y] \leq \beta \Phi(x,y).
\end{eqnarray}
Then the mixing time of the Markov chain is bounded by:
\begin{eqnarray} \label{eq:mixingbound}
T_{mix}(\epsilon)  \leq \frac{\log (D\epsilon^{-1})}{1-\beta}. 
\end{eqnarray}
\end{theorem}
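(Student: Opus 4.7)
The plan is to combine a geometric contraction of $\Phi$ under the coupling with Markov's inequality and the standard coupling lemma for total variation distance. I would proceed in four steps.

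First, I would iterate the one-step contraction hypothesis. By the tower property of conditional expectation applied to (\ref{eq:betacoupling}), for any starting pair $(x,y)$,
\begin{equation*}
E[\Phi(X(t),Y(t)) \mid X(0)=x, Y(0)=y] \leq \beta^{t}\Phi(x,y) \leq \beta^{t} D_{max}.
\end{equation*}
Second, since $\Phi(X(t),Y(t)) \geq D_{min}$ whenever $X(t) \neq Y(t)$, Markov's inequality gives
\begin{equation*}
\Pr[X(t) \neq Y(t) \mid X(0)=x,Y(0)=y] \;\leq\; \frac{E[\Phi(X(t),Y(t))]}{D_{min}} \;\leq\; D \, \beta^{t}.
\end{equation*}

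Third, I would invoke the standard coupling lemma: for any coupling, $\|P^{t}(x,\cdot)-P^{t}(y,\cdot)\|_{var} \leq \Pr[X(t)\neq Y(t)]$, and averaging the right-hand side against $\pi(y)$ (and using that $\pi$ is stationary) yields $\|P^{t}(x,\cdot)-\pi\|_{var} \leq \max_{y} \Pr[X(t)\neq Y(t)\mid X(0)=x,Y(0)=y] \leq D\beta^{t}$. Finally, to make this at most $\epsilon$ it suffices to take $t \geq \log(D\epsilon^{-1})/\log(\beta^{-1})$, and using the elementary inequality $\log(\beta^{-1}) \geq 1-\beta$ for $\beta \in (0,1)$ gives the claimed bound $T_{mix}(\epsilon) \leq \log(D\epsilon^{-1})/(1-\beta)$.

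None of the steps is genuinely difficult; the one place where a reader might stumble is justifying the averaging step that replaces $P^{t}(y,\cdot)$ with $\pi$, so I would state explicitly that one may take $Y(0)$ drawn from $\pi$ (keeping $Y(t) \sim \pi$ by stationarity) and apply convexity of $\|\cdot\|_{var}$. The other minor technical point is the inequality $\log(\beta^{-1}) \geq 1-\beta$, which follows from concavity of $\log$ and which I would simply state.
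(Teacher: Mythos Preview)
The paper does not actually prove Theorem~\ref{theorem:coupling}; it is stated as background (with a reference to \cite{dyegre98}) and no proof is given. Your argument is the standard one and is correct: iterate the contraction, apply Markov's inequality using $\Phi\geq D_{min}$ on the event $\{X(t)\neq Y(t)\}$, invoke the coupling characterization of total variation (starting $Y(0)\sim\pi$), and finish with $\log(\beta^{-1})\geq 1-\beta$. There is nothing to compare against in the paper itself.
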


In general, determining $\beta$ is hard since one needs to check the contraction condition
(\ref{eq:betacoupling}) for all pairs of configurations. In \cite{bubdye97} the so-called \emph{path
coupling} method was introduced by Bubley and Dyer to simplify the calculation. Under path coupling, we only
need to check the contraction condition for certain pairs of configurations. The path coupling method is
described in the following theorem.

\begin{theorem}  \label{theorem:pathcoupling}
Let $S \subseteq \Omega \times \Omega$ and suppose for all $X, Y \in \Omega \times \Omega$, there exists a
path $X=Z_0, Z_1, \ldots, Z_r=Y$ between $X$ and $Y$ such that $(Z_l,Z_{l+1})\in S$ for $0\leq l <r$ and
$$\Phi(X,Y) = \sum_{l=0}^{r-1}\Phi(Z_l, Z_{l+1}).$$
Suppose there exist a constant $\beta<1$ and a coupling $(X(t),Y(t))$ of the Markov chain such that for any
$(x,y) \in S$,
$$E[\Phi(X(t+1),Y(t+1))|X(t)=x,Y(t)=y] \leq \beta \Phi(x,y).$$
Then the mixing time of the Markov chain is bounded by:
\begin{eqnarray} \label{eq:pathmixingbound}
T_{mix}(\epsilon) \leq  \frac{\log (D\epsilon^{-1})}{1-\beta}.
\end{eqnarray}
\end{theorem}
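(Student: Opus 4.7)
The plan is to reduce Theorem~\ref{theorem:pathcoupling} to Theorem~\ref{theorem:coupling} by extending the given coupling, which is defined only on pairs in $S$, to a coupling defined on all of $\Omega\times\Omega$ that still satisfies the contraction condition~(\ref{eq:betacoupling}). Once this extension is in hand, the mixing-time bound is immediate from Theorem~\ref{theorem:coupling}.

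First, I would fix an arbitrary pair $(x,y)\in\Omega\times\Omega$ and a shortest path $x=z_0,z_1,\ldots,z_r=y$ with $(z_l,z_{l+1})\in S$ and $\Phi(x,y)=\sum_{l=0}^{r-1}\Phi(z_l,z_{l+1})$, as guaranteed by the hypothesis. For each consecutive pair, the assumption provides a one-step coupling with the correct marginals and the contraction property. The key construction is to glue these pairwise couplings into a single joint law on $(Z_0(t+1),Z_1(t+1),\ldots,Z_r(t+1))$ such that (i) each consecutive pair $(Z_l(t+1),Z_{l+1}(t+1))$ has the prescribed coupling distribution, and (ii) each marginal $Z_l(t+1)$ is one step of the original Markov chain started from $z_l$. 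This gluing is possible because the two pairwise couplings sharing the index $l$ both induce the same marginal on $Z_l(t+1)$, namely $P(z_l,\cdot)$; one can therefore sample $Z_0(t+1)$ from the Markov chain, then sample $Z_1(t+1)$ from its conditional distribution given $Z_0(t+1)$ under the coupling of $(z_0,z_1)$, and iterate along the path. In particular, the pair $(Z_0(t+1),Z_r(t+1))=(X(t+1),Y(t+1))$ is a valid coupling of the chain started from $x$ and $y$.

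Next, I would apply the triangle inequality for $\Phi$ along the path, followed by linearity of expectation and the hypothesized contraction on each link of the path:
\begin{eqnarray*}
E[\Phi(X(t+1),Y(t+1))\mid X(t)=x,Y(t)=y] & \leq & \sum_{l=0}^{r-1}E[\Phi(Z_l(t+1),Z_{l+1}(t+1))\mid Z_l(t)=z_l,Z_{l+1}(t)=z_{l+1}] \\
& \leq & \beta\sum_{l=0}^{r-1}\Phi(z_l,z_{l+1}) \;=\; \beta\,\Phi(x,y),
\end{eqnarray*}
where the last equality uses that the chosen path realizes $\Phi(x,y)$. Since $(x,y)$ was arbitrary, the extended coupling satisfies the hypothesis of Theorem~\ref{theorem:coupling} with the same constant $\beta$, and the bound~(\ref{eq:pathmixingbound}) follows immediately from~(\ref{eq:mixingbound}).

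The main obstacle is the gluing step: showing that the pairwise couplings on consecutive links can be combined into a single joint distribution with the correct marginals. This is where one must be careful to verify that the Markov-chain marginals on the intermediate vertices $Z_1(t+1),\ldots,Z_{r-1}(t+1)$ agree across the two couplings that share each such vertex, so that sequential conditional sampling is well-defined. The rest of the argument is then just the triangle inequality plus the already-established Theorem~\ref{theorem:coupling}.
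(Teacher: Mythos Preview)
Your argument is correct and is essentially the standard Bubley--Dyer proof of the path coupling theorem: extend the pairwise coupling on $S$ to all of $\Omega\times\Omega$ by gluing along a geodesic path, then invoke Theorem~\ref{theorem:coupling}. The gluing step you describe is exactly the right construction, and your caution about matching marginals at the intermediate vertices is the only subtlety.

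Note, however, that the paper does not supply its own proof of Theorem~\ref{theorem:pathcoupling}; it is stated in Section~\ref{sec:prelim} as background and attributed to Bubley and Dyer~\cite{bubdye97}. So there is no in-paper proof to compare against. What you have written is the standard argument that one would find in the cited literature, and it is sound.
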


Note that the key simplification in the path coupling theorem (Theorem~\ref{theorem:pathcoupling}), compared
to the coupling theorem (Theorem~\ref{theorem:coupling}), is that the contraction condition
(\ref{eq:betacoupling}) needs to hold only for $(x,y) \in S$, instead of $(x,y) \in \Omega\times\Omega$.

\section{Mixing Time of Glauber Dynamics With Parallel Updates}  \label{sec:mixingPGD}

In this section we analyze the mixing time of Glauber dynamics with parallel updates using the path coupling
theorem. We will use the following distance function: for any $\sigma,\eta\in\Omega$,
\begin{eqnarray} \label{eq:distfun}
\Phi(\sigma,\eta) & = & \sum_{v}|\sigma_v-\eta_v|f(v) = \sum_{v\in\sigma\bigtriangleup\eta}f(v),
\end{eqnarray}
where $f(v)>0$ is a (weight) function of $v\in  V$ and recall that $\sigma\bigtriangleup\eta
=(\sigma\setminus\eta)\cup(\eta\setminus\sigma)$. Note that this distance function is a weighted Hamming
distance function and satisfies all the properties of a metric.

Consider the following \emph{coupling} $(\sigma(t),\eta(t))$: in every time slot both chains select the same
update set and use the same coin toss for every vertex in the update set if that vertex can be added to both
configurations.

Let $E[\Delta\Phi(\sigma(t),\eta(t))]$ be the (conditional) expected change of the the distance between the
states of the two Markov chains $\{\sigma(t)\}$ and $\{\eta(t)\}$ after one slot:
\begin{eqnarray*}
E[\Delta\Phi(\sigma(t),\eta(t))] = E[\Phi(\sigma(t+1), \eta(t+1)|\sigma(t), \eta(t)] - \Phi(\sigma(t),
\eta(t)).
\end{eqnarray*}
For any $\mathbf m\in \mathcal{I}$, let
$$E[\Delta^{\mathbf m}\Phi(\sigma(t),\eta(t))]=E[\Delta\Phi(\sigma(t),\eta(t))|\mathbf m \mbox{ is the update
set}].$$

\begin{lemma}\label{lem:independent updates}
Let $\tilde{\mathbf m}=(y_1,\ldots,y_{|\mathbf m|})$ be any ordering of $\mathbf m$. For any
$\sigma(t),\eta(t)\in\Omega$,
$$E[\Delta^{\mathbf m}\Phi(\sigma(t),\eta(t))]=\sum_{k=1}^{|\mathbf m|} E[\Delta^{y_k}\Phi(\sigma(t),\eta(t))].$$
\end{lemma}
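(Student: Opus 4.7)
The plan is to decompose the total change in distance into a per-vertex sum and then exploit two structural features of the parallel Glauber dynamics, namely that updates only look back at $\sigma(t)$ and $\eta(t)$ (not at other simultaneously-updated vertices) and that the update set $\mathbf m$ is itself an independent set.

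First I would write, using the weighted Hamming distance in (\ref{eq:distfun}),
\begin{eqnarray*}
\Delta\Phi(\sigma(t),\eta(t)) & = & \sum_{v\in V}\bigl(|\sigma_v(t+1)-\eta_v(t+1)|-|\sigma_v(t)-\eta_v(t)|\bigr)f(v).
\end{eqnarray*}
For $v\notin\mathbf m$ the coupled dynamics set $\sigma_v(t+1)=\sigma_v(t)$ and $\eta_v(t+1)=\eta_v(t)$, so those terms vanish and the sum reduces to $\sum_{v\in\mathbf m}\Delta_v\Phi$, where $\Delta_v\Phi$ denotes the $v$-th summand. By linearity of conditional expectation it then suffices to show, for each $y_k\in\mathbf m$, that
$$E[\Delta_{y_k}\Phi\mid \mathbf m\text{ is the update set}] \;=\; E[\Delta^{y_k}\Phi(\sigma(t),\eta(t))].$$

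The key step is verifying this single-vertex identity. In the parallel Glauber dynamics step 2, the update of $y_k$ depends only on the coin toss assigned to $y_k$ and on whether $\sum_{w\in\mathcal N_{y_k}}\sigma_w(t)=0$ (and similarly for $\eta$); crucially, it does not depend on the new values at other vertices of $\mathbf m$. Moreover, because $\mathbf m$ is an independent set, no neighbor of $y_k$ lies in $\mathbf m$, so $\sigma_w(t+1)=\sigma_w(t)$ for every $w\in\mathcal N_{y_k}$, and the condition triggering cases (a)--(c) is exactly the same as it would be if the update set were the singleton $\{y_k\}$. Combined with the fact that the coupling uses the same coin toss at $y_k$ in the two chains, this means the joint distribution of $(\sigma_{y_k}(t+1),\eta_{y_k}(t+1))$ given $(\sigma(t),\eta(t))$ and the event $\{\mathbf m\text{ is the update set}\}$ is identical to its distribution given the event $\{\{y_k\}\text{ is the update set}\}$. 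Therefore the expected contribution $E[\Delta_{y_k}\Phi\mid \mathbf m]$ coincides with $E[\Delta^{y_k}\Phi(\sigma(t),\eta(t))]$.

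Summing over $k=1,\ldots,|\mathbf m|$ yields the claimed decomposition, and the ordering $\tilde{\mathbf m}$ is irrelevant precisely because the per-vertex contributions are unaffected by the presence of the other vertices of $\mathbf m$. The main conceptual obstacle, and the place to be careful in the write-up, is to justify cleanly why one can ``decouple'' vertex $y_k$ from the rest of $\mathbf m$; this rests entirely on the two properties above (look-back at time $t$ only, and $\mathbf m\in\mathcal I$), and nothing deeper. No routine calculation of the coupling's transition probabilities is actually needed here.
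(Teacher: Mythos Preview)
Your proof is correct and rests on the same key observation as the paper's---namely that because $\mathbf m$ is an independent set, no vertex in $\mathbf m$ has a neighbor in $\mathbf m$, so the update at each $y_k$ depends only on the time-$t$ configuration and its own coin toss. The framing, however, differs slightly: the paper argues that updating all of $\mathbf m$ simultaneously yields the very same configurations $\sigma(t+1),\eta(t+1)$ as updating the vertices of $\mathbf m$ one-by-one in any order, and then (implicitly) uses this to equate the total change with a telescoping sum of single-vertex changes. You instead exploit directly the additive structure of the weighted Hamming distance $\Phi$, splitting $\Delta\Phi$ into per-vertex summands and showing each summand has the same conditional law under $\{\mathbf m\}$ as under $\{\{y_k\}\}$. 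Your route is a bit more explicit and avoids the intermediate ``sequential updates'' picture; the paper's route is perhaps more intuitive but leaves the final passage from ``same configurations'' to the sum formula to the reader. Both are short and rely on nothing beyond $\mathbf m\in\mathcal I$ and the look-back-only update rule.
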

\begin{proof}
Note that the value of $\Phi(\sigma,\eta)$ is completely determined by the set $\sigma\bigtriangleup\eta$,
which in turn depends only on $\sigma$ and $\eta$. Hence, it suffices to show that we will obtain the same
sets $\sigma(t+1)$ and $\eta(t+1)$ by updating all the vertices of $\mathbf m$ simultaneously and by updating
them in any sequential order.

The moves trying to remove vertices from a configuration will be successful in all cases. The outcome of a
move trying to add a vertex $y$ to a configuration $\omega$ is successful if and only if $\mathcal
N_y\cap\omega=\emptyset$. But $\mathbf m$ is an independent set, so no neighbor of a vertex $y\in\mathbf m$
is in $\mathbf m$. Then the states of the neighbors of $y$ are unchanged after updating any subset of
vertices of $\mathbf m$. Hence, $\forall k\in\{1,\ldots,|\mathbf m|\}$, the outcome of a move trying to add
$y_k$ to $\omega$ will be the same if we update the vertices of $\mathbf m$ sequentially and if we update all
the vertices of $\mathbf m$ simultaneously. This is in particular true for the configurations $\sigma(t)$ and
$\eta(t)$, so we conclude that the two update procedures will yield the same sets $\sigma(t+1)$ and
$\eta(t+1)$.
\end{proof}

We say that $\sigma,\eta\in\Omega$ are \emph{adjacent} and we write $\sigma\sim\eta$ if there exists $v\in V$
such that $\sigma$ and $\eta$ differ only at $v$. Let
$$S=\Big\{(\sigma, \eta):\sigma, \eta \in \Omega \mbox{ and } \sigma\sim\eta\Big\}.$$

Note that under the distance function defined in (\ref{eq:distfun}), for all $\sigma, \eta \in \Omega$, we
can find a path $\sigma=\tau_0, \tau_1, \ldots, \tau_{|\sigma \triangle \eta|}=\eta$ between $\sigma$ and
$\eta$ such that $(\tau_l,\tau_{l+1})\in S$ for $0\leq l <r$ and $\Phi(\sigma, \eta) =
\sum_{l=0}^{r-1}\Phi(\tau_l, \tau_{l+1}).$

Now consider a pair of adjacent configurations $\sigma(t)$ and $\eta(t)$ that differ only at $v$. Without
loss of generality, suppose $\sigma_v(t)=0$ and $\eta_v(t)=1.$ This means that, $\eta_w(t)=0$ for all $w\in
\mathcal{N}_v$. Since, $\sigma(t)$ and $\eta(t)$  differ only at $v$, this also means that $\sigma_w(t)=0$
for all $w\in \mathcal{N}_v$.

\begin{lemma}\label{lemma:dist_multiset}
\begin{eqnarray} \label{eq:dist_dif}
E[\Delta\Phi(\sigma(t),\eta(t))] \leq -q_v f(v)+\sum_{w\in\mathcal N_v}\frac{q_w \lambda_w}{1+\lambda_w}f(w).
\end{eqnarray}
\end{lemma}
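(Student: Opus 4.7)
The plan is to combine the decomposition result of Lemma~\ref{lem:independent updates} with a careful case analysis of what each single-vertex update does to the pair $(\sigma(t),\eta(t))$. Since
$$E[\Delta\Phi(\sigma(t),\eta(t))]=\sum_{\mathbf m\in\mathcal I} q_{\mathbf m}\,E[\Delta^{\mathbf m}\Phi(\sigma(t),\eta(t))],$$
applying Lemma~\ref{lem:independent updates} and swapping the order of summation reduces the problem to bounding $\sum_{y\in V}q_y\,E[\Delta^{y}\Phi(\sigma(t),\eta(t))]$. So I would argue vertex by vertex.

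I would then split the contributing vertices $y$ into three types, exploiting the hypotheses $\sigma_v(t)=0$, $\eta_v(t)=1$, and $\sigma_w(t)=\eta_w(t)=0$ for all $w\in\mathcal N_v$. First, for $y=v$: since no neighbor of $v$ is occupied in either configuration, updating $v$ alone uses the same coin toss in both chains and produces the same value at $v$, which makes $\sigma$ and $\eta$ coincide. Thus $E[\Delta^{v}\Phi]=-f(v)$. Second, for $y\in\mathcal N_v$: in $\eta$ the occupied neighbor $v$ forbids adding $y$, so $\eta_y$ stays $0$; in $\sigma$, $y$ is added with probability $p_y=\lambda_y/(1+\lambda_y)$ provided it has no other occupied neighbor, which creates a new disagreement at $y$ worth $f(y)$. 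This yields $E[\Delta^{y}\Phi]\le p_y f(y)$. Third, for $y\notin\{v\}\cup\mathcal N_v$: since $v\notin\mathcal N_y$ and $\sigma,\eta$ agree outside $v$, the states of $y$'s neighbors coincide in both chains, the common coin toss therefore yields the same outcome, and $E[\Delta^{y}\Phi]=0$.

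Putting the three cases together gives
$$E[\Delta\Phi(\sigma(t),\eta(t))]=\sum_{y\in V} q_y\,E[\Delta^{y}\Phi]\le -q_v f(v)+\sum_{w\in\mathcal N_v}\frac{q_w\lambda_w}{1+\lambda_w}f(w),$$
which is the desired inequality.

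The main subtlety, and the step I would be most careful about, is the neighbor case $y\in\mathcal N_v$: the coupling is asymmetric because $v$ blocks $y$ in $\eta$ but not in $\sigma$, so one must check that (i) a failed add in $\sigma$ leaves the distance unchanged, (ii) a successful add in $\sigma$ contributes exactly $f(y)$ (no vertex other than $v$ and $y$ has its status altered, so no disagreement is destroyed), and (iii) when $y$ has another occupied neighbor in $\sigma$ the contribution is $0$, which only strengthens the bound. The remaining cases are essentially bookkeeping once the coupling (identical update set and identical coin tosses on the overlap) is spelled out.
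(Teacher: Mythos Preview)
Your proposal is correct and follows essentially the same approach as the paper's proof: apply Lemma~\ref{lem:independent updates} and swap the order of summation to reduce to $\sum_{y\in V}q_y\,E[\Delta^{y}\Phi]$, then do the identical three-case analysis on $y=v$, $y\in\mathcal N_v$, and $y\notin\{v\}\cup\mathcal N_v$. Your treatment of the neighbor case is in fact slightly more explicit than the paper's, but the argument is the same.
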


\begin{proof}
Using Lemma~\ref{lem:independent updates}, we have
\begin{eqnarray*}
E[\Delta\Phi(\sigma(t),\eta(t))] & = &  E_{\mathbf m}\Big[E[\Delta^{\mathbf m}\Phi(\sigma(t),\eta(t))] \Big]\\
& = & \sum_{\mathbf m}q_{\mathbf m}E[\Delta^{\mathbf
m}\Phi(\sigma(t),\eta(t))]\\
& = & \sum_{\mathbf m}q_{\mathbf m}\sum_{y\in\mathbf m}E[\Delta^y\Phi(\sigma(t),\eta(t))]\\
& = & \sum_{y\in V}q_y E[\Delta^y\Phi(\sigma(t),\eta(t))].
\end{eqnarray*}

Note that only updates on vertices $v$ and $w \in \mathcal{N}_v$ can affect the value of
$E[\Delta\Phi(\sigma(t),\eta(t))]$. In particular, if $v$ is selected for update and since we use the same
coin toss for both Markov chains, $\sigma(t+1)=\eta(t+1)$. Thus $E[\Delta^v\Phi(\sigma(t),\eta(t))]=-f(v)$.

If $w\in \mathcal{N}_v$ is selected for update, under configuration $\eta(t),$ $w$ can only take value $0$
because $w$ has a neighbor (i.e., $v$) belongs to $\eta(t)$. While under configuration $\sigma(t),$ there are
two cases: \\
1) if $w$ has a neighbor in $\sigma(t)$, then $w$ can only take value $0$; \\
2) if $w$ has no neighbors in $\sigma(t)$, $w$ can take value $1$ with probability
$\frac{\lambda_w}{1+\lambda_w}$ and value $0$ otherwise.

Hence for $w\in \mathcal{N}_v$,
$$E[\Delta^w\Phi(\sigma(t),\eta(t))] \leq \frac{\lambda_w}{1+\lambda_w}f(w).$$ Summing up all contributions we
have (\ref{eq:dist_dif}).
\end{proof}

Now we are ready to present the main result of this paper.
\begin{theorem} \label{thm:main}
For any positive function $f(v)$ of $v\in V$, let $m=\min_{v\in V}f(v)$, $M=\max_{v\in V}f(v)$, and
$\xi=\frac{M}{m}$. If
\begin{eqnarray} \label{eq:theta}
\theta \triangleq \min_{v\in V}\left\{q_v f(v)-\sum_{w\in\mathcal N_v}\frac{q_w\lambda_w
}{1+\lambda_w}f(w)\right\} &
> & 0,
\end{eqnarray}
then the mixing time of the parallel Glauber dynamics is bounded by:
\begin{eqnarray} \label{eq:mixingPGB}
T_{\textit{mix}}(\epsilon) & \leq & \frac{M}{\theta}\log(\epsilon^{-1} n\xi).
\end{eqnarray}
\end{theorem}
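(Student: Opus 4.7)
The plan is to invoke the path coupling theorem (Theorem~\ref{theorem:pathcoupling}) with the coupling and distance function already introduced, using the set $S$ of adjacent configurations as the privileged pairs. The existence of a suitable decomposition of any $(\sigma,\eta)$ into a sequence of $S$-adjacent steps whose distances telescope has already been noted right after the definition of $S$, so all that is left is to verify the contraction condition on pairs in $S$ and then extract the constants $\beta$ and $D$.

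For any pair $(\sigma,\eta)\in S$ differing only at a vertex $v$, we have $\Phi(\sigma,\eta)=f(v)$, and Lemma~\ref{lemma:dist_multiset} gives
\[
E[\Phi(\sigma(t{+}1),\eta(t{+}1))\mid \sigma,\eta]\;\leq\;f(v)\;-\;\Bigl(q_v f(v)-\sum_{w\in\mathcal N_v}\tfrac{q_w\lambda_w}{1+\lambda_w}f(w)\Bigr)\;\leq\; f(v)-\theta.
\]
Dividing by $\Phi(\sigma,\eta)=f(v)\leq M$ and using $\theta>0$, the right-hand side is at most $f(v)(1-\theta/M)$, so the contraction condition holds with
\[
\beta\;=\;1-\frac{\theta}{M}\;<\;1.
\]
This step is where condition (\ref{eq:theta}) is used in an essential way: it makes the per-vertex bound uniformly negative, and dividing out by $f(v)\leq M$ gives a single contraction factor that is independent of $v$.

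It remains to estimate $D=D_{\max}/D_{\min}$ for the weighted Hamming metric $\Phi$. Since any two configurations differ on at most $n$ vertices and each disagreement contributes at most $M$, we have $D_{\max}\leq nM$, while $D_{\min}=\min_{v}f(v)=m$, so $D\leq n\xi$. Plugging $\beta=1-\theta/M$ and $D\leq n\xi$ into (\ref{eq:pathmixingbound}) gives
\[
T_{\textit{mix}}(\epsilon)\;\leq\;\frac{\log(D\epsilon^{-1})}{1-\beta}\;\leq\;\frac{M}{\theta}\log(\epsilon^{-1}n\xi),
\]
which is the desired bound.

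The proof is essentially bookkeeping once Lemma~\ref{lemma:dist_multiset} is available: the only mild obstacle is making sure that the per-vertex expected change, which is bounded by a quantity depending on $v$, can be converted into a uniform multiplicative contraction $\beta$; this is handled cleanly by normalizing by $f(v)\leq M$ and taking the min in (\ref{eq:theta}) over $v$. Everything else (the choice of $S$, the telescoping of $\Phi$, and the estimate of $D$) is immediate from the definitions.
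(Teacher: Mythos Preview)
Your proposal is correct and follows essentially the same argument as the paper's proof: both use Lemma~\ref{lemma:dist_multiset} on adjacent pairs to get $E[\Delta\Phi]\leq-\theta$, convert this to a multiplicative contraction $\beta=1-\theta/M$ via $\Phi(\sigma,\eta)=f(v)\leq M$, bound $D\leq n\xi$, and apply Theorem~\ref{theorem:pathcoupling}. The only difference is presentational---you spell out the estimate of $D_{\max}$ and $D_{\min}$ more explicitly, whereas the paper simply asserts $D=n\xi$.
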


\begin{proof}

For any pair of adjacent configurations $(\sigma(t), \eta(t))\in S$ that differ at some vertex $v\in V$, from
(\ref{eq:dist_dif}) and (\ref{eq:theta}) we have:
\begin{eqnarray*}
E[\Delta\Phi(\sigma(t),\eta(t))]  \leq  -\theta \leq -\frac{\theta}{M}\Phi(\sigma(t),\eta(t)),
\end{eqnarray*}
where we use the fact that
$$\Phi(\sigma(t),\eta(t)) = f(v) \leq M.$$
Therefore,
$$E[\Phi(\sigma(t+1),\eta(t+1))|\sigma(t),\eta(t)]\leq\big(1-\frac{\theta}{M}\big)\Phi(\sigma(t),\eta(t)).$$
Then, by applying the path coupling theorem where we let $\beta= 1-\frac{\theta}{M}$ and $D=n\xi$, we prove
the bound in (\ref{eq:mixingPGB}).
\end{proof}

\subsection{Conditions for Fast Mixing of Parallel Glauber Dynamics}

We can now specify the (weight) function $f$ to obtain different conditions on the fugacities $\lambda_v$'s
for fast mixing. We will show three such examples.

\begin{corollary}
Let $m=\min_{v\in V}\frac{1+\lambda_v}{q_v}$, $M=\max_{v\in V}\frac{1+\lambda_v}{q_v},$ and
$\xi=\frac{M}{m}.$ If
\begin{eqnarray}
\theta \triangleq \min_{v\in V}\left\{1+\lambda_v-\sum_{w\in\mathcal N_v}\lambda_w\right\} &
> & 0,
\end{eqnarray}
then we have
\begin{eqnarray}
T_{\textit{mix}}(\epsilon) & \leq & \frac{M}{\theta}\log(\epsilon^{-1} n\xi).
\end{eqnarray}
\end{corollary}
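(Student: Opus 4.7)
The plan is to derive this corollary as a direct specialization of Theorem \ref{thm:main} by choosing the weight function $f$ appropriately so that the generic condition (\ref{eq:theta}) reduces to the one stated in the corollary.

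First I would examine the generic condition $q_v f(v) - \sum_{w\in\mathcal N_v}\frac{q_w\lambda_w}{1+\lambda_w}f(w) > 0$ and try to pick $f(v)$ so that both the diagonal term $q_v f(v)$ and the off-diagonal factor $\frac{q_w\lambda_w}{1+\lambda_w}f(w)$ become simple. The natural choice is $f(v) = \frac{1+\lambda_v}{q_v}$: this makes $q_v f(v) = 1 + \lambda_v$ and collapses $\frac{q_w\lambda_w}{1+\lambda_w}f(w) = \lambda_w$, so the expression inside the $\min$ in (\ref{eq:theta}) becomes exactly $1 + \lambda_v - \sum_{w\in\mathcal N_v}\lambda_w$, matching the $\theta$ of the corollary.

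Next I would verify the remaining quantities line up. With $f(v) = \frac{1+\lambda_v}{q_v}$, we have $M = \max_v f(v) = \max_v \frac{1+\lambda_v}{q_v}$ and $m = \min_v f(v) = \min_v \frac{1+\lambda_v}{q_v}$, which are precisely the $M$ and $m$ (and hence $\xi$) defined in the corollary. Since $f$ is strictly positive (as $q_v > 0$ by irreducibility, and $\lambda_v > 0$), Theorem \ref{thm:main} applies and delivers the bound $T_{\text{mix}}(\epsilon) \leq \frac{M}{\theta}\log(\epsilon^{-1} n\xi)$ verbatim.

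There is essentially no obstacle here: the work is purely algebraic bookkeeping, and the only thing to check is that the chosen $f$ is admissible (positive) and that the substitutions cancel as intended. The proof would consist of one short paragraph stating the choice of $f$, one line of computation showing the cancellation, and an invocation of Theorem \ref{thm:main}.
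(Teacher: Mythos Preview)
Your proposal is correct and matches the paper's approach exactly: the paper's entire proof is the single line ``Choose $f(v)=\frac{1+\lambda_v}{q_v}$, $\forall v\in V$,'' after which Theorem~\ref{thm:main} gives the stated bound. Your algebraic verification that this choice collapses the condition in (\ref{eq:theta}) to $1+\lambda_v-\sum_{w\in\mathcal N_v}\lambda_w$ and aligns $m$, $M$, $\xi$ with the corollary's definitions is precisely the bookkeeping the paper leaves implicit.
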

\begin{proof}
Choose $f(v)=\frac{1+\lambda_v}{q_v}$, $\forall v\in V$.
\end{proof}

\begin{corollary}
Let $q_{min}=\min_{v\in V}q_v$, $q_{max}=\max_{v\in V}q_v,$ and $\xi=\frac{q_{max}}{q_{min}}.$ If
\begin{eqnarray}
b \triangleq \max_{v\in V}\sum_{w\in\mathcal N_v}\frac{\lambda_w}{1+\lambda_w} &<& 1,
\end{eqnarray}
then we have
\begin{eqnarray} \label{eq:mixingPGB2}
T_{\textit{mix}}(\epsilon) & \leq & \frac{\log\big(\epsilon^{-1}n\xi\big)}{q_{min}(1-b)}.
\end{eqnarray}
\end{corollary}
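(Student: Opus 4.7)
The plan is to derive this corollary directly from Theorem \ref{thm:main} by making a judicious choice of the weight function $f$. Looking at the definition of $\theta$ in (\ref{eq:theta}), each term in the inner sum carries a factor $\frac{q_w\lambda_w}{1+\lambda_w}f(w)$, while the quantity $b$ in the hypothesis is a sum of the bare factors $\frac{\lambda_w}{1+\lambda_w}$. To make these match, I would cancel the $q_w$ inside the sum by setting $f(v) = 1/q_v$ for every $v \in V$.

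With this choice, the diagonal term in (\ref{eq:theta}) becomes $q_v f(v) = 1$ for every $v$, and each off-diagonal term reduces to $\frac{\lambda_w}{1+\lambda_w}$. Hence
\[
\theta \;=\; \min_{v\in V}\Bigl\{1 - \sum_{w\in\mathcal N_v}\frac{\lambda_w}{1+\lambda_w}\Bigr\} \;=\; 1-b,
\]
which is positive by hypothesis. At the same time $m = 1/q_{max}$, $M = 1/q_{min}$, and the ratio $\xi = M/m = q_{max}/q_{min}$ coincides with the $\xi$ defined in the corollary.

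Plugging these three quantities into the bound (\ref{eq:mixingPGB}) from Theorem \ref{thm:main} gives
\[
T_{\textit{mix}}(\epsilon) \;\leq\; \frac{M}{\theta}\log(\epsilon^{-1}n\xi) \;=\; \frac{\log(\epsilon^{-1}n\xi)}{q_{min}(1-b)},
\]
which is precisely (\ref{eq:mixingPGB2}). There is essentially no obstacle once the correct $f$ is spotted; the only ``content'' in the argument is recognizing that scaling $f$ by $1/q_v$ is exactly what is needed to strip the $q_w$ out of the contraction condition so that the hypothesis on $b$ can be applied uniformly.
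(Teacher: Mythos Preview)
Your proof is correct and takes exactly the same approach as the paper: the paper's proof consists of the single line ``Choose $f(v)=\frac{1}{q_v}$, $\forall v\in V$,'' and you have simply spelled out the resulting computations of $\theta$, $M$, and $\xi$ explicitly.
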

\begin{proof}
Choose $f(v)=\frac{1}{q_v}$, $\forall v\in V$.
\end{proof}

\begin{remark}
If $q_{v}>c$ for some constant $c>0$ which is independent of the size of the network $n$, then the mixing
time is $O(\log n)$. In particular, for a bounded-degree graph G where $\delta$ and $\Delta$ are the minimum
and maximum vertex degrees, if $q_v=\frac{1}{d_v+1}$ where $d_v$ is the degree of $v$, then we have
$\frac{1}{\Delta+1}\leq q_v \leq \frac{1}{\delta+1}$ and
$$T_{\textit{mix}}(\epsilon)  \leq  \frac{\Delta+1}{1-b}\log\Big(\frac{\Delta+1}{\delta+1}\epsilon^{-1} n\Big).$$
\end{remark}

\begin{corollary}
If $\lambda_v<\frac{1}{d_v-1}$ for all $v\in V$, then
$T_{\textit{mix}}(\epsilon)\leq\frac{M}{\theta}\log(\epsilon^{-1} n\xi)$, for some constants $M$, $\xi$ and
$\theta>0$.
\end{corollary}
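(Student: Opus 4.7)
The plan is to apply Theorem \ref{thm:main} directly with a carefully chosen weight function. The hypothesis $\lambda_v<\tfrac{1}{d_v-1}$ is equivalent to $\tfrac{1}{\lambda_v}+1-d_v>0$, so I want to engineer $f$ so that the quantity in braces in (\ref{eq:theta}) reduces exactly to $\tfrac{1}{\lambda_v}+1-d_v$.

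The choice I propose is
$$f(v)=\frac{1+\lambda_v}{q_v\lambda_v},\qquad v\in V.$$
With this choice, $q_vf(v)=\tfrac{1+\lambda_v}{\lambda_v}$, and for every $w\in\mathcal N_v$ the factor $\tfrac{q_w\lambda_w}{1+\lambda_w}f(w)$ telescopes to $1$. Hence
$$q_vf(v)-\sum_{w\in\mathcal N_v}\frac{q_w\lambda_w}{1+\lambda_w}f(w)=\frac{1+\lambda_v}{\lambda_v}-d_v=\frac{1}{\lambda_v}+1-d_v,$$
which is strictly positive for every $v$ by hypothesis. Therefore $\theta=\min_v\bigl\{\tfrac{1}{\lambda_v}+1-d_v\bigr\}>0$, and the constants $M=\max_vf(v)$, $m=\min_vf(v)$, $\xi=M/m$ are finite and positive because $f$ is a strictly positive function on the finite vertex set $V$. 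Plugging these into (\ref{eq:mixingPGB}) gives the claimed bound.

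No step here is genuinely difficult; the only thing that requires thought is guessing the weight function. The key insight is that the coefficient $\tfrac{q_w\lambda_w}{1+\lambda_w}$ in (\ref{eq:theta}) wants to be cancelled by the factor in $f(w)$, which forces $f(w)\propto\tfrac{1+\lambda_w}{q_w\lambda_w}$; the remaining freedom (an overall scalar) is irrelevant since Theorem \ref{thm:main} is invariant under positive rescaling of $f$. A minor caveat is that the argument assumes $\lambda_v>0$ so that $f(v)$ is well defined; vertices with $\lambda_v=0$ can be removed from the graph without loss of generality since they are never selected into any feasible configuration.
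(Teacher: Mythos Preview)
Your proof is correct. The paper takes a slightly different route: it chooses the weight function $f(v)=d_v/q_v$, so that the quantity in (\ref{eq:theta}) becomes $d_v-\sum_{w\in\mathcal N_v}\frac{\lambda_w}{1+\lambda_w}d_w$, and then observes that a \emph{sufficient} condition for this to be positive for every $v$ is that each summand satisfies $\frac{\lambda_w}{1+\lambda_w}d_w<1$, which rearranges to $\lambda_w<\frac{1}{d_w-1}$. Your choice $f(v)=\frac{1+\lambda_v}{q_v\lambda_v}$ is arguably cleaner: it makes the expression in (\ref{eq:theta}) equal to $\frac{1}{\lambda_v}+1-d_v$ on the nose, so the hypothesis is \emph{equivalent} to $\theta>0$ rather than merely sufficient for it. The price you pay is the need to assume $\lambda_v>0$ so that $f$ is finite; the paper's weight avoids this but instead tacitly needs $d_v>0$ for $f(v)>0$. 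Both edge cases are harmless (isolated vertices and zero-fugacity vertices are effectively absent from the dynamics), so the two arguments are equally valid specializations of Theorem~\ref{thm:main}.
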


\begin{proof}
The proof is very similar to that of Theorem~\ref{thm:main}. We can choose $f(v)=\frac{d_v}{q_v}$ and let
$M=\max_{v\in V}\frac{d_v}{q_v}$ and
$$\xi=\frac{\max_{v\in V}\frac{d_v}{q_v}}{\min_{v\in V}\frac{d_v}{q_v}}.$$ The parallel Glauber dynamics will
have fast mixing if
$$\theta=\min_{v\in V}\left\{d_y-\sum_{w\in\mathcal N_y}\frac{\lambda_w}{1+\lambda_w}d_w\right\}>0.$$
To achieve that we need, for all $v\in V$,
$$d_v-\sum_{w\in\mathcal N_v}\frac{\lambda_w}{1+\lambda_w}d_w>0.$$
It is sufficient that $\frac{\lambda_v}{1+\lambda_v}d_v<1$, which is equivalent to
$\lambda_v<\frac{1}{d_v-1}$.
\end{proof}

Note that the condition $\lambda_v<\frac{1}{d_v-1}$ for all $v\in V$ might be very different from
$b=\max_{v\in V}\sum_{w\in\mathcal N_v}\frac{\lambda_w}{1+\lambda_w}<1$ (e.g., in a star network).

\section{Mixing Time of Single-Site Glauber Dynamics With Heterogenous Fugacities}  \label{sec:mixingHF}

Since the single-site Glauber dynamics is a special case of the parallel Glauber dynamics (in which the
update set always consists of only one vertex), the general results derived in the previous section also
apply to the single-site Glauber dynamics. The motivation of this section is to derive a larger region on the
fugacities under which the single-site Glauber dynamics is fast mixing, where we use a similar path coupling
technique as in \cite{vig01}.

We redefine the state space to be the set of all configurations $\Lambda^n$. For a vertex $v\in V$ and a
configuration $\sigma\in\Lambda^n$, we define the set of \emph{blocked} neighbors of $v$ with respect to
$\sigma$ as
$$B_\sigma(v)=\Big\{w\in\mathcal N_v: w \in \sigma \mbox{ or } \mathcal N_w\cap\sigma\neq\emptyset\Big\}$$
and of \emph{unblocked} neighbors of $v$ as
$$\bar{B}_{\sigma} (v)=\mathcal N_v\setminus B_\sigma(v).$$
Note that the unblocked neighbors of $v$ are the neighbors of $v$ that can be added to the configuration in
the next move. If $v\notin\sigma$, we write $\sigma^v$ to denote the configuration that differs from $\sigma$
only at $v$.

We say that $\sigma,\eta\in\Lambda^n$ are \emph{adjacent} and we write $\sigma\sim\eta$ if there exists $v\in
V$ such that $\sigma$ and $\eta$ differ only at $v$. Let
$$S=\Big\{(\sigma, \eta):\sigma\sim\eta\Big\}.$$
Then, for all $\sigma,\eta\in\Lambda^n$, we can define a path in $\Lambda^n$ between $\sigma$ and $\eta$ as a
sequence $(\tau_0,\ldots,\tau_r)\subseteq\Lambda^n$ such that for $0\leq i < r,\tau_i\sim\tau_{i+1}$ and
$\tau_0=\sigma,\tau_r=\eta$. Let $\mathcal P(\sigma,\eta)$ be the set of all paths in $\Lambda^n$ between
$\sigma$ and $\eta$.

We will use the following distance function on $\Lambda^n \times \Lambda^n$:
$\forall\sigma,\eta\in\Lambda^n$, let
$$\Phi(\sigma,\eta)=\mbox{min}_{(\tau_0,\ldots,\tau_r)\in\mathcal P(\sigma,\eta)}\sum_{i=0}^{r-1}l(\tau_i,\tau_{i+1}),$$
where
\begin{eqnarray} \label{eq:linklength}
l(\sigma,\sigma^v)=1+\frac{1}{2}\sum_{w\in\bar{B}_{\sigma}(v)}\lambda_w
\end{eqnarray}
can be viewed as the \emph{length} of edge $(\sigma, \sigma^v)$ according to the adjacent relationship
defined by $S$. $\Phi$ is clearly symmetric, non-negative, zero only when the configurations are identical,
and satisfies the triangle inequality (because $\mathcal P(\sigma,\eta)\subseteq\mathcal
P(\sigma,\mu)\times\mathcal P(\mu,\eta)$). Hence, it is indeed a metric on $\Lambda^n \times \Lambda^n$.

For all $v, u\in V$, we let $ \mathcal{T}(v,u)=\mathcal N_v\cap\mathcal N_u$ be the set of vertices that form
triangles with $v$ and $u$. To each vertex $v\in V$, we associate a fugacity $\lambda_v$.

Consider a pair of adjacent configurations $\sigma \sim \sigma^v$ (where $v\notin\sigma$). Suppose
$\sigma(t)=\sigma,$ $\eta(t)=\sigma^v$. The \emph{coupling} is simply that each configuration attempts the
same move at every time slot. More precisely, both Markov chains select the same vertex to update, and use
the same coin toss if the vertex can be added to the configurations. For convenience, we use the following
notations from \cite{vig01}. Let
$$E[\Delta\Phi]=E[\Phi(\sigma(t+1), \eta(t+1)|\sigma(t), \eta(t)] - \Phi(\sigma(t), \eta(t)).$$
This can be further calculated via the analysis of individual moves. Let
\begin{eqnarray*}
E[\Delta^{+y}\Phi]= E[\Delta \Phi | \mbox{both chains attempt to add } y \mbox{ at time
} t], \\
E[\Delta^{-y}\Phi] = E[\Delta \Phi | \mbox{both chains attempt to remove } y \mbox{ at time } t],
\end{eqnarray*}
and denote the total effect of all moves on $y$ by
\begin{eqnarray*}
E[\Delta^{y}\Phi] & = & \frac{\lambda_y}{1+\lambda_y}E[\Delta^{+y}\Phi] +
\frac{1}{1+\lambda_y}E[\Delta^{-y}\Phi].
\end{eqnarray*}

We provide the main result of this section in the following theorem, which says that the mixing time of
(single-site) Glauber dynamics with heterogenous fugacities is $O(n \log n)$ under certain conditions.
\begin{theorem} \label{theorem:SGB}
Let
\begin{equation}
 a =\max_{v\in V}\Big\{\sum_{w\in\mathcal N_v}\lambda_{w}\Big\}
\end{equation}
and $\gamma=\sum_{y\in V}(1+\lambda_y)$. If $ a <2$ and the probability of selecting vertex $y\in V$ to
update is $q_y = \frac{1+\lambda_y}{\gamma}$, then the mixing time of the Glauber dynamics is bounded by
\begin{eqnarray}
T_{\textit{mix}}(\epsilon) & \leq & \frac{\gamma}{1-\frac{ a }{2}}\log\Big(\epsilon^{-1} n(1+\frac{ a
}{2})\Big).
\end{eqnarray}
\end{theorem}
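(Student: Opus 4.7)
The plan is to apply the path coupling theorem (Theorem~\ref{theorem:pathcoupling}) with $S=\{(\sigma,\sigma^v):\sigma\in\Lambda^n,\,v\notin\sigma\}$ and the coupling in which both chains select the same vertex $y$ and use the same coin toss. Because $\Phi$ is defined as a minimum over paths of edge-length sums, any pair $(\sigma,\eta)$ admits a minimizing path whose consecutive elements lie in $S$ and whose total length is exactly $\Phi(\sigma,\eta)$, so the path-decomposition hypothesis of Theorem~\ref{theorem:pathcoupling} is automatic. What remains is to show the one-step contraction
$$E[\Phi(\sigma(t+1),\eta(t+1))\mid\sigma(t)=\sigma,\eta(t)=\sigma^v]\leq\Big(1-\tfrac{1-a/2}{\gamma}\Big)\,l(\sigma,\sigma^v)$$
for every $(\sigma,\sigma^v)\in S$ with $v\notin\sigma$.

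I would write $E[\Delta\Phi]=\sum_y q_y E[\Delta^y\Phi]$ and split into three types of updates. When $y=v$, both chains consult the same neighborhood $\mathcal N_v\cap\sigma=\mathcal N_v\cap\sigma^v$ (since $v\notin\mathcal N_v$) and assign $v$ the same value under the shared coin toss, so the chains merge and $E[\Delta^v\Phi]=-l(\sigma,\sigma^v)$; this yields the dominant negative contribution $-q_v\,l(\sigma,\sigma^v)=-\tfrac{1+\lambda_v}{\gamma}l(\sigma,\sigma^v)$. When $y\notin\mathcal N_v\cup\{v\}$, both chains apply the identical deterministic update to $y$, preserving the symmetric difference $\{v\}$, and only the set $\bar B_{\cdot}(v)$ appearing inside $l$ can change, because adding or removing $y$ may toggle whether a common neighbor $w\in\mathcal N_v\cap\mathcal N_y$ is blocked.

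The delicate case is $y=w\in\mathcal N_v$, since the presence of $v$ in $\eta=\sigma^v$ forbids an add-move on $w$ that may succeed in $\sigma$. In the most expensive subcase, $w\in\bar B_\sigma(v)$ and the coin lands heads, producing $\sigma(t+1)=\sigma\cup\{w\}$ and $\eta(t+1)=\sigma^v$, which differ at $\{v,w\}$. I would bound the new distance by the triangle inequality along the detour through $\sigma$,
$$\Phi(\sigma\cup\{w\},\sigma^v)\leq l(\sigma,\sigma\cup\{w\})+l(\sigma,\sigma^v)=\Big(1+\tfrac{1}{2}\!\!\sum_{u\in\bar B_\sigma(w)}\lambda_u\Big)+l(\sigma,\sigma^v),$$
so the net increase is at most $l(\sigma,\sigma\cup\{w\})$. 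The remaining subcases ($w\in B_\sigma(v)$, $w\in\sigma$, attempt-remove on $w$) and the precise effect of $y\notin\mathcal N_v\cup\{v\}$ on $\sum_{u\in\bar B_\sigma(v)}\lambda_u$ are handled by analogous triangle-inequality estimates, using the symmetry $u\in\bar B_\sigma(v)\cap\mathcal N_w\iff u\in(\bar B_\sigma(w)\cap\mathcal N_v)\setminus\{v\}$ (for $u\notin\sigma$) that motivates the $\tfrac{1}{2}$ factor in $l$: it allows triangle contributions seen from both endpoints to be shared rather than double-counted.

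Collecting all contributions, weighted by $q_y=(1+\lambda_y)/\gamma$ and the coin factors $\lambda_y/(1+\lambda_y)$ or $1/(1+\lambda_y)$, the positive terms should compress to at most $\tfrac{a}{2\gamma}\,l(\sigma,\sigma^v)$ under the assumption $a<2$, giving $E[\Delta\Phi]\leq-\tfrac{1-a/2}{\gamma}\,l(\sigma,\sigma^v)$ and hence $\beta=1-(1-a/2)/\gamma$. Theorem~\ref{theorem:pathcoupling} then yields the stated bound once one notes that $D_{\min}\geq 1$ (every edge has length at least $1$) and $D_{\max}\leq n(1+a/2)$ (any two configurations differ in at most $n$ vertices, and each edge length satisfies $l(\cdot,\cdot)\leq 1+\tfrac{1}{2}\sum_{w\in\mathcal N_v}\lambda_w\leq 1+a/2$), so $D\leq n(1+a/2)$. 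I expect the main obstacle to be organizing the case analysis so that the positive contributions from $y\in\mathcal N_v$ and the shifts in $\bar B_\sigma(v)$ caused by $y\notin\mathcal N_v\cup\{v\}$ combine to \emph{exactly} $\tfrac{a}{2\gamma}\,l(\sigma,\sigma^v)$ and no more — this is precisely where the $\tfrac{1}{2}$-weight in $l$ is tuned to cancel the doubly-counted triangle terms, and some care is required to ensure that the infeasible intermediate configurations used in the triangle-inequality detours do not spoil the estimate.
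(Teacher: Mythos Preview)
Your framework is exactly the paper's: path coupling on $\Lambda^n$ with the metric $\Phi$ built from the edge lengths $l(\sigma,\sigma^v)=1+\tfrac12\sum_{w\in\bar B_\sigma(v)}\lambda_w$, the identity coupling, and the decomposition $E[\Delta\Phi]=\sum_y q_y E[\Delta^y\Phi]$ with $q_y=(1+\lambda_y)/\gamma$. The $D_{\min}\ge 1$, $D_{\max}\le n(1+a/2)$ estimates are also correct. However, two concrete points in your execution diverge from what actually makes the argument close.

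First, the contraction the paper obtains is \emph{additive}, not multiplicative: the heart of the proof is Lemma~\ref{biglemma}, which shows
\[
2\sum_{y\in V}(1+\lambda_y)E[\Delta^y\Phi]\ \le\ -2+\sum_{w\in\mathcal N_v}\lambda_w\ \le\ -2+a,
\]
so that $E[\Delta\Phi]\le(-1+a/2)/\gamma$, and only then is $\Phi(\sigma,\sigma^v)\ge 1$ invoked to turn this into $E[\Delta\Phi]\le\beta\,\Phi$ with $\beta=1-(1-a/2)/\gamma$. Your claim that the positive terms compress to $\tfrac{a}{2\gamma}\,l(\sigma,\sigma^v)$ is not how the bookkeeping comes out; after all cancellations the positive part equals $\sum_{w\in\bar B_\sigma(v)}\lambda_w+\sum_{w\in\mathcal N_v}\lambda_w$ (in the $2\gamma$-scaled units), which is not bounded by $a\cdot l(\sigma,\sigma^v)$ in general.

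Second, your triangle-inequality detour for the bad move $+w$ goes through $\sigma$, giving $E[\Delta^{+w}\Phi]\le 1+\tfrac12\sum_{u\in\bar B_\sigma(w)}\lambda_u$. The paper instead detours through $\sigma\cup\{v,w\}$, which yields the tighter bound $1+\tfrac12\sum_{u\in\bar B_\sigma(w)\setminus\{v\}\setminus\mathcal T(v,w)}\lambda_u$. This sharper form is essential: it is exactly these second-order sums (over $\bar B_\sigma(w)$ restricted to distance-$2$ vertices from $v$) that are cancelled termwise by the \emph{negative} contributions $E[\Delta^{+x}\Phi]=-\tfrac12\sum_{w\in\bar B_\sigma(v):\,x\in\bar B_\sigma(w)}\lambda_w$ coming from successful add-moves at distance~$2$. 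Your ``triangle double-counting'' intuition is not the operative mechanism; the real cancellation is between the $w$-add terms and the $x$-add terms, and it requires the detour through $\sigma\cup\{v,w\}$, not through $\sigma$. With your looser detour the extra $\tfrac12(\lambda_v+\sum_{u\in\mathcal T(v,w)}\lambda_u)$ in each $w$-term does not cancel and the bound does not close.
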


We need the following lemma to prove Theorem~\ref{theorem:SGB}, and its proof is given in the Appendix.

\begin{lemma}\label{biglemma}
\begin{equation}
2\sum_{y\in V}(1+\lambda_y)E[\Delta^y\Phi]\leq-2+\sum_{w\in\mathcal N_v}\lambda_w.
\end{equation}
\end{lemma}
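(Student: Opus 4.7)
The plan is to decompose $(1+\lambda_y)E[\Delta^y\Phi] = \lambda_y E[\Delta^{+y}\Phi] + E[\Delta^{-y}\Phi]$ and split the sum over $y\in V$ by the graph distance from $y$ to $v$. Since $l(\sigma,\sigma^v) = 1 + \frac{1}{2}\sum_{w\in\bar B_\sigma(v)}\lambda_w$ depends only on $v$, on the states at $v$'s neighbors, and—through the blocking relation—on the states at $v$'s distance-two neighbors, any update on a vertex beyond distance two contributes nothing to $E[\Delta^y\Phi]$. The nonzero terms therefore split into three cases: $y=v$, $y\in\mathcal N_v$, and $y$ at distance exactly two from $v$ (i.e., $y\in\mathcal N_w\setminus(\mathcal N_v\cup\{v\})$ for some $w\in\mathcal N_v$).

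For $y=v$, the shared coin forces the two chains into the same configuration after the update: because $\sigma$ and $\sigma^v$ agree at every vertex other than $v$, updating $v$ synchronously in both produces $\sigma'=\eta'$ regardless of whether the coin asks to add or remove. Thus $E[\Delta^{\pm v}\Phi] = -l(\sigma,\sigma^v)$, and after doubling this case contributes $-2(1+\lambda_v) - (1+\lambda_v)\sum_{w\in\bar B_\sigma(v)}\lambda_w$, which already supplies the $-2$ on the right-hand side together with a negative reservoir tied to the unblocked neighbors of $v$. For $y\in\mathcal N_v$, the update in $\eta=\sigma^v$ is a no-op because $v$ blocks $y$; in the $\sigma$-chain only an add-attempt on $y\in\bar B_\sigma(v)$ is effective, and it produces a two-vertex discrepancy at $\{v,y\}$. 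I would bound the new $\Phi$ by the triangle inequality along a path through the intermediate configuration $(\sigma^v)^y$, giving a positive contribution proportional to $\lambda_y$ times a linear combination of edge-length terms. For $y$ at distance two, the update is identical in both chains and keeps them differing only at $v$, but it can transition some $w\in\mathcal N_v\cap\mathcal N_y$ between $B_\sigma(v)$ and $\bar B_\sigma(v)$, modifying $l(\sigma,\sigma^v)$ by $\pm\frac{1}{2}\lambda_w$: a negative contribution when $y\notin\sigma$ is added, and a positive one when $y\in\sigma$ is removed.

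The main obstacle is the bookkeeping required to show that these three contributions collapse to at most $-2+\sum_{w\in\mathcal N_v}\lambda_w$. The Case~1 reservoir $-(1+\lambda_v)\sum_{w\in\bar B_\sigma(v)}\lambda_w$ must absorb the positive Case~2 terms, while the signed Case~3 contributions should aggregate—after weighting by the fugacities and by the probabilities $p_y,\bar p_y$—into precisely $\sum_{w\in\mathcal N_v}\lambda_w$. The cleanest way to make the cancellations explicit is to pair each Case~3 update at a distance-two vertex $y$ with the $\lambda_w$-weighted edge-length term it modifies (for $w\in\mathcal N_v\cap\mathcal N_y$), and to match each Case~2 add-attempt at $y\in\mathcal N_v\cap\bar B_\sigma(v)$ against the corresponding slot in the reservoir. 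A further subtlety is that the state space is extended to $\Lambda^n$, so the argument must remain valid when $\sigma^v$ is infeasible (i.e., $v$ has a neighbor in $\sigma$): in that regime Case~1 still yields $-(1+\lambda_v)l$ deterministically, but Cases~2 and~3 require revisiting to confirm the bound.
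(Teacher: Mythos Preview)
Your decomposition by graph distance from $v$ is exactly the paper's, but the proposed cancellation mechanism is wrong in two places, and this is not just bookkeeping.

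First, you omit the remove moves at $w\in\mathcal N_v$. When $w\in\sigma$ (so $w\in B_\sigma(v)$), removing $w$ in both chains keeps the discrepancy at $\{v\}$ but enlarges $\bar B_\sigma(v)$ (it may unblock $w$ itself and any $w'\in\mathcal T(v,w)$ whose only occupied neighbor was $w$), so $l$ and hence $\Phi$ strictly increase. These are positive Case~2 terms you have not accounted for; in the paper they form the piece that, combined with the Case~3 remove terms, contributes at most $\sum_{w\in B_\sigma(v)}\lambda_w$.

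Second, the Case~1 ``reservoir'' $-(1+\lambda_v)\sum_{w\in\bar B_\sigma(v)}\lambda_w$ cannot absorb the Case~2 add terms. After the triangle inequality, a successful add at $w\in\bar B_\sigma(v)$ contributes (doubled) $\lambda_w\bigl(2+\sum_{y\in\bar B_\sigma(w)\setminus\{v\}\setminus\mathcal T(v,w)}\lambda_y\bigr)$, so the Case~2 add total carries a term that is \emph{quadratic} in the fugacities. Your reservoir is only linear in the $\lambda_w$'s (times $1+\lambda_v$), so for small $\lambda_v$ it cannot cover this. The actual cancellation is between Case~2 add and Case~3 add: the distance-two add at $x$ contributes $-\lambda_x\sum_{w\in\bar B_\sigma(v):\,x\in\bar B_\sigma(w)}\lambda_w$, and after reindexing this kills exactly the inner $\sum_y\lambda_y$ in Case~2 add, leaving $2\sum_{w\in\bar B_\sigma(v)}\lambda_w$. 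Only then does the (correctly computed) Case~1 contribution, together with the remove terms, collapse everything to $-2+\sum_{w\in\mathcal N_v}\lambda_w$. Also note that under the add/remove semantics used here (attempt-to-add leaves the configuration unchanged when infeasible, rather than forcing the site to $0$), your Case~1 claim that $E[\Delta^{+v}\Phi]=-l$ regardless of $\mathcal N_v\cap\sigma$ is false; when $\mathcal N_v\cap\sigma\neq\emptyset$ the add attempt is a no-op in both chains and $E[\Delta^{+v}\Phi]=0$.
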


\begin{proof} (Theorem~\ref{theorem:SGB})
Suppose $ a <2$ and $q_y = \frac{1+\lambda_y}{\gamma}$, then using Lemma~\ref{biglemma} we have
\begin{eqnarray*}
E[\Delta\Phi] & = & \sum_{y\in V}q_y E[\Delta^y\Phi] \\
& =  &  \frac{\sum_{y\in V}(1+\lambda_y)E[\Delta^y\Phi]}{\gamma} \\
& \leq & \frac{-1+\frac{ a }{2}}{\gamma} \\
& \leq & \frac{-1+\frac{ a }{2}}{\gamma}\Phi(\sigma(t),\eta(t)),
\end{eqnarray*}
where we use the fact that $\Phi(\sigma(t), \eta(t))\geq 1$ for any $\sigma(t) \neq \eta(t)$. Therefore,
$$E[\Phi(\sigma(t+1),\eta(t+1))|\sigma(t),\eta(t)]\leq(1+\frac{-1+\frac{ a }{2}}{\gamma})\Phi(\sigma(t),\eta(t)).$$

Since $\Phi(\sigma,\sigma^v)\geq1$, so for all $\sigma,\eta\in \Lambda^n$, $\sigma\neq\eta$, we have
$\Phi(\sigma,\eta)\geq1$. Also, $\Phi(\sigma,\sigma^v)\leq1+\frac{ a }{2}$, and thus $\Phi(\sigma,\eta)\leq
n(1+\frac{ a }{2})$. Then, apply the path coupling theorem where we let $\beta=1+\frac{-1+\frac{ a
}{2}}{\gamma}$ and $D=n(1+\frac{ a }{2})$:
\begin{eqnarray*}
T_{\textit{mix}}(\epsilon) & \leq & \frac{1}{1-(1+\frac{-1+\frac{ a }{2}}{\gamma})}\log\Big(
n(1+\frac{ a }{2})\epsilon^{-1}\Big) \\
& = & \frac{\gamma}{1-\frac{ a }{2}}\log\Big(
\epsilon^{-1}n(1+\frac{ a }{2})\Big) \\
& \leq & \frac{3n}{1-\frac{ a }{2}}\log( 2\epsilon^{-1}n).
\end{eqnarray*}
\end{proof}

\begin{example}
Consider a graph $G$ with $n$ vertices, labelled $1,2,\ldots,n.$ For each vertex $i$, let $\mathcal N_i=\{j:
j\neq i \mbox{ and } |j-i| \leq 3\}$ be the set of neighbors of $i$. For Glauber dynamics with homogeneous
fugacity $\lambda$, Vigoda's result \cite{vig01} says that it has mixing time $O(n\log n)$ when $\lambda  <
\frac{2}{\Delta-2}=0.5$, since the maximum vertex degree $\Delta=6$ in this graph. While for Glauber dynamics
with heterogenous fugacities, our result says that even the fugacity at some vertex exceeds $0.5$ (but is
less than $2$), the Glauber dynamics can still have $O(n\log n)$ mixing time, e.g., when $\lambda_i = 1$ for
vertices $i = 6l+1$, $l=0,1,\ldots,$ and $\lambda_j = 0.18$ for other vertices $j$.
\end{example}

\section{Conclusion}  \label{sec:conclusion}

In this paper we have analyzed the mixing time of a generalization of Glauber dynamics with parallel updates
and heterogeneous fugacities. By applying the path coupling theorem and by choosing appropriate distance
functions, we have obtained various conditions on the system parameters such as fugacities, vertex degrees
and update probabilities under which the parallel Glauber dynamics is fast mixing. In particular, we have
shown that the mixing time of the parallel Glauber dynamics grows as $O(\log n)$ in the number of vertices
$n$ for bounded-degree graphs when the fugacities satisfy certain conditions.

\appendix

\section{Proof of Lemma~\ref{biglemma}}

\begin{proof}
Note that when $a<2$, the edge length defined in (\ref{eq:linklength}) satisfies $1\leq l(\sigma,\sigma^v)
<2$. Hence the length of a path of two edges or more is at least $2$. This implies that the distance between
two adjacent configurations is simply the length of the direct edge between them, i.e.,
\begin{eqnarray*}
\Phi(\sigma, \sigma^v) = l(\sigma,\sigma^v) = 1+\frac{1}{2}\sum_{w\in\bar{B}_{\sigma}(v)}\lambda_w.
\end{eqnarray*}

Only updates on vertices within a distance of $2$ hops from $v$ can influence the value of
$\Phi(\sigma,\sigma^v)$. Hence,
\begin{eqnarray*}
& & 2\sum_{y\in V}(1+\lambda_y)E[\Delta^y\Phi] \\
& =& 2(1+\lambda_v)E[\Delta^v\Phi]+2\sum_{w\in\mathcal N_v}(1+\lambda_w)E[\Delta^w\Phi] +2\sum_{x\in \mathcal
N_w:w\in\mathcal N_v}(1+\lambda_x)E[\Delta^x\Phi].
\end{eqnarray*}

We will examine separately the terms involving $v$, $w$'s and $x$'s. It is important to notice that the moves
trying to remove a vertex from a configuration are always successful, whereas the moves trying to add a
vertex $y$ to a configuration $\sigma$ succeed only if $\mathcal N_y\cap\sigma=\emptyset$.
\\\\
$\mathbf {E[\Delta^v\Phi]}$:\\
Only if $\mathcal N_v\cap\sigma=\emptyset$, will the move trying to add $v$ to $\sigma$ be successful.
Therefore,
\begin{eqnarray*}
2(1+\lambda_v)E[\Delta^v\Phi] & = & \left\{ \begin{tabular}{ll} $-(1+\lambda_v)(2+\sum_{w\in\bar{B}_{\sigma}(v)}\lambda_w)$  & if $\mathcal N_v\cap\sigma=\emptyset$,\\
$-(2+\sum_{w\in\bar{B}_{\sigma}(v)}\lambda_w)$  & otherwise. \end{tabular} \right.
\end{eqnarray*}
\\\\
$\mathbf {E[\Delta^w\Phi]}$:\\
Trying to update a neighbor of $v$ can only increase the distance between the two configurations. Indeed,
either adding a neighbor of $v$ will fail, or it will increase the length of any path between the two
configurations by one, because it is then necessary to remove that neighbor from both configurations before
it may be possible to add $v$. Removing a neighbor of $v$ from the configuration will potentially unblock
some other neighbors of $v$ and thus increase the distance between the configurations.

More precisely: if $w\in B_\sigma(v)$, it is impossible to add $w$ to $\sigma$ and so $E[\Delta^{+w}\Phi]=0$;
otherwise, $E[\Delta^{+w}\Phi]=\Phi(\sigma_w,\sigma^v)-\Phi(\sigma,\sigma^v)$. By the triangle inequality,
$$\Phi(\sigma_w,\sigma^v)\leq\Phi(\sigma_w,\sigma_{w,v})+\Phi(\sigma_{w,v},\sigma^v).$$

We have $\Phi(\sigma_w,\sigma_{w,v})\leq\Phi(\sigma,\sigma^v)$ because
$\bar{B}_{\sigma_w}(v)\subseteq\bar{B}_{\sigma}(v)$. Also,
$2\Phi(\sigma_{w,v},\sigma^v)=2+\sum_{y\in\bar{B}_{\sigma^v}(w)}\lambda_y$ and
$\bar{B}_{\sigma^v}(w)=\bar{B}_{\sigma}(w)\setminus\{v\}\setminus \mathcal{T}(v,w)$. Then, by combining
everything, we get
\begin{eqnarray*}
2E[\Delta^{+w}\Phi]\leq\left\{ \begin{tabular}{ll} $0$ & if $w\in B_\sigma(v)$,\\
$2+\sum_{y\in\bar{B}_{\sigma}(w)\setminus\{v\}\setminus \mathcal{T}(v,w)}\lambda_y$ & otherwise.\end{tabular}
\right.
\end{eqnarray*}

Trying to remove $w$ will succeed and unblock some of $v$'s neighbors, as well as add $w$ to
$\bar{B}_{\sigma}(v)$ if $\mathcal N_w\cap\sigma=\emptyset$. A neighbor $w'$ of $v$ will be unblocked if and
only if $w$ is the only neighbor of $w'$ in $\sigma$. Thus,
\begin{eqnarray*}
2E[\Delta^{-w}\Phi]
&=&\left\{ \begin{tabular}{l} $0$  if $w\notin\sigma$,\\
$\delta(\mathcal N_w\cap\sigma=\emptyset)\lambda_w+\sum_{w'\in \mathcal{T}(v,w):\mathcal
N_{w'}\cap\sigma=\{w\}}\lambda_{w'}$ otherwise. \end{tabular} \right.
\end{eqnarray*}
where we use the indicator function $\delta(a)$ which is equal to $1$ if $a$ is true and $0$ otherwise.
\\\\
$\mathbf {E[\Delta^x\Phi]}$: \\
As in the case of the terms involving $w$'s, trying to remove a vertex $x$ from the configurations unblocks
some neighbors of $v$ and thus increases the distance between the configurations. On the other hand, adding a
vertex $x$ blocks some neighbors of $v$ and reduces the distance between the configurations. More precisely,
when removing $x$ from $\sigma$ and $\sigma^v$, a neighbor $w$ of $v$ will be unblocked if and only if $x$ is
the only neighbor of $w$ in $\sigma$. Thus,
\begin{eqnarray*}
2E[\Delta^{-x}\Phi]
&=&\left\{ \begin{tabular}{ll} $0$ & if $w\notin\sigma$,\\
$\sum_{w\in \mathcal{T}(v,x):\mathcal N_w\cap\sigma=\{x\}}\lambda_w$ & otherwise.\end{tabular} \right.
\end{eqnarray*}

Trying to add $x$ fails if $x\in\sigma$ or $\mathcal N_x\cap\sigma\neq\emptyset$. Otherwise, it succeeds and
blocks some neighbors of $v$. More precisely, a neighbor $w$ of $v$ is blocked if it is not in $\sigma$, it
was not already blocked in configuration $\sigma$ before the move, and it is a neighbor of $x$. Thus,
\begin{eqnarray*}
2E[\Delta^{+x}\Phi]=\left\{ \begin{tabular}{ll} $0$ & if $x\in\sigma$ or $\mathcal N_x\cap\sigma\neq\emptyset$,\\
$-\sum_{w\in\bar{B}_{\sigma}(v):x\in\mathcal N_w}\lambda_w$ & otherwise.\end{tabular} \right.
\end{eqnarray*}
We can rewrite this equation in the following form:
\begin{eqnarray*}
2E[\Delta^{+x}\Phi]=-\sum_{w\in\bar{B}_{\sigma}(v):x\in\bar{B}_{\sigma}(w)}\lambda_w.
\end{eqnarray*}\\

We are now ready to sum up the contributions from all the updates:
\begin{eqnarray}
& & 2\sum_{y\in V}(1+\lambda_y)E[\Delta^y\Phi] \nonumber \\
&\leq&-(1+\lambda_v\delta(\mathcal N_v\cap\sigma=\emptyset))\left(2+\sum_{w\in\bar{B}_{\sigma}(v)}\lambda_w\right) \nonumber \\
&+&\sum_{w\in\bar{B}_{\sigma}(v)}\lambda_w\left(2+\sum_{y\in\bar{B}_{\sigma}(w)\setminus\{v\}\setminus \mathcal{T}(v,w)}\lambda_y\right) \label{eqn8} \\
&+&\sum_{w\in\sigma}\left(\delta(\mathcal N_w\cap\sigma=\emptyset)\lambda_w+\sum_{w'\in \mathcal{T}(v,w):\mathcal N_{w'}\cap\sigma=\{w\}}\lambda_{w'}\right) \label{eqn9} \\
&+&\sum_{x\in\sigma}\left(\sum_{w\in \mathcal{T}(v,x):\mathcal N_w\cap\sigma=\{x\}}\lambda_w\right) \label{eqn10} \\
&-&\sum_{x:d(v,x)=2}\lambda_x\left(\sum_{w\in\bar{B}_{\sigma}(v):x\in\bar{B}_{\sigma}(w)}\lambda_w\right).
\label{eqn11}
\end{eqnarray}

We want to simplify the terms (\ref{eqn8}) + (\ref{eqn11}):
\begin{eqnarray*}
(\ref{eqn8}) + (\ref{eqn11}) & =
&\sum_{w\in\bar{B}_{\sigma}(v)}\lambda_w\left(2+\sum_{y\in\bar{B}_{\sigma}(w)\setminus\{v\}\setminus
\mathcal{T}(v,w)}\lambda_y-\sum_{x\in\bar{B}_{\sigma}(w)}\lambda_x\right).
\end{eqnarray*}
As we recall that the notation $x$ in $\{x\in\bar{B}_{\sigma}(w)\}$ refers to vertices that are at distance
$2$ from $v$ (as opposed to $y$ that refers to any vertex in $ V$), we get that
$$\{x\in\bar{B}_{\sigma}(w)\}=\bar{B}_{\sigma}(w)\setminus\{v\}\setminus \mathcal{T}(v,w),$$
that is the set $\bar{B}_{\sigma}(w)$ minus its elements at a distance $0$ or $1$ from $v$. Then, we can
simplify:
\begin{eqnarray*}
(\ref{eqn8}) + (\ref{eqn11})=2\sum_{w\in\bar{B}_{\sigma}(v)}\lambda_w.
\end{eqnarray*}

We now turn to (\ref{eqn9}) + (\ref{eqn10}):
\begin{eqnarray*}
& & (\ref{eqn9}) + (\ref{eqn10}) \\
&=&\sum_{w\in\sigma}\left(\delta(\mathcal N_w\cap\sigma=\emptyset)\lambda_w+\sum_{w'\in \mathcal{T}(v,w):\mathcal N_{w'}\cap\sigma=\{w\}}\lambda_{w'}\right) +\sum_{x\in\sigma}\left(\sum_{w\in \mathcal{T}(v,x):\mathcal N_w\cap\sigma=\{x\}}\lambda_w\right)\\
&=&\sum_{w\in\sigma:\mathcal N_w\cap\sigma=\emptyset}\lambda_w+\sum_{w\in B_\sigma(v):|\mathcal
N_w\cap\sigma|=1}\lambda_w \leq \sum_{w\in B_\sigma(v)}\lambda_w.
\end{eqnarray*}

Combining all these inequalities, we have
\begin{eqnarray*}
& & 2\sum_{y\in V}(1+\lambda_y)E[\Delta^y\Phi] \\
&\leq&-(1+\lambda_v\delta(\mathcal N_v\cap\sigma=\emptyset))\Big(2+\sum_{w\in\bar{B}_{\sigma}(v)}\lambda_w\Big)+2\sum_{w\in\bar{B}_{\sigma}(v)}\lambda_w+\sum_{w\in B_\sigma(v)\cup\sigma}\lambda_w\\
&=&-2+\sum_{w\in\mathcal N_v}\lambda_w-\lambda_v\delta(\mathcal N_v\cap\sigma=\emptyset)\Big(2+\sum_{w\in\bar{B}_{\sigma} (v)}\lambda_w\Big)\\
&\leq&-2+\sum_{w\in\mathcal N_v}\lambda_w.
\end{eqnarray*}
\end{proof}

\end{document}